\newtheorem{theorem}{Theorem}[section]
\newtheorem{lemma}[theorem]{Lemma}
\newtheorem{proposition}[theorem]{Proposition}
\newtheorem{corollary}[theorem]{Corollary}
\newtheorem{remark}[theorem]{Remark}
\newtheorem*{mainthm}{Main Theorem}
\theoremstyle{definition}
\newtheorem{definition}[theorem]{Definition}
\newtheorem{definitions and remarks}[theorem]{Definitions and Remarks}
\theoremstyle{remark}
\newcommand{\IP}{{\mathbb P}}
\newcommand{\IQ}{{\mathbb Q}}
\newcommand{\IA}{{\mathbb A}}
\newcommand{\IC}{{\mathbb C}}
\newcommand{\IZ}{{\mathbb Z}}
\newcommand{\nind}{{\noindent}}
\begin{document}

\title{Invariants of Surfaces of Degree $d$ in $\IP^n$}

\author{R.V. Gurjar}
\address{Department of Mathematics\\
  Indian Institute of Technology Bombay\\
  Mumbai 400076\\
  India}
\email{gurjar@math.iitb.ac.in}

\author{Alok Maharana}
\address{Department of Mathematical Sciences\\
  Indian Institute of Science Education and Research Mohali\\
  Sector 81, Mohali 140306\\
  India}
\email{maharana@iisermohali.ac.in}

\subjclass[2010]{14R05, 14J80}

\keywords{algebraic surface, homology, cohomology}

\maketitle

\begin{abstract}  
{  We prove that several invariants of a possibly singular complex affine or projective variety of degree $d$ in the affine space $\IA^{n}$, or $\IP^n$, are bounded by a function of $d$ alone, provided $b_{1}=0$ for a resolution of singularities of the variety.
}
 \end{abstract}



\section{Introduction}

\nind
In this paper we will only consider complex algebraic or analytic varieties.\\

\nind 
The main theme of this paper is that for normal affine, or projective, surfaces $S$ (resp. $X$) of degree $d$ in $\IA^n$ (resp. in $\IP^n$) and $b_1(S')=0$ (resp. $b_1(X')=0$) for a resolution of singularities $S'$ of $S$ (resp. $X'$ of $X$) many invariants of $S,S',X,X'$ are bounded by functions of $d$ alone, where for one closed embedding of an affine surface $S$ in $\IA^n$ we define degree of $S$ as the degree of its closure $X$ in $\IP^n$. This result is easy to prove if $X$ is a smooth complete intersection in $\IP^n$. But for normal $X$ this is not obvious. Some of our proofs give quantitative bounds for these invariants (which are not best possible) but some bounds are given only existential proofs. Even for normal surfaces $X$ of degree $d$ in $\IP^3$ and $b_1(X')=0$ we have not been able to give quantitative bounds for order of $H_1(X';\IZ)$ since our proof uses some easy stratification arguments. Hopefully, a better proof can correct this defect.\\

We now state the main result of this paper.\\
  
\begin{mainthm} Let $S$ be an irreducible normal affine closed subvariety of degree $d$ in $\IA^{n}$ for $n>2$. Assume that a resolution of singularities $S'$ of $S$ has $b_1(S')=0$. Then the order of $H_{1}(S'; \IZ)$ is bounded by a function of $d$ alone.\\
Similarly, the order of $H_1(S-Sing~S;\IZ)$ is bounded by a function of $d$ alone.\\
In particular, the order of $H_{1}(S; \IZ)$ has the same bound.\\ 

\end{mainthm}

\nind
Throughout the paper we will simply say that some invariant is bounded by $d$, instead of bounded above by a function of $d$ alone.\\
  
  \begin{remark} Let $S$ (or $X$) be a closed normal affine (resp. projective) subvariety of $\IA^n$ (resp. $\IP^n$).  If $dim~S>2$ (resp. $dim~X>2$) then a general hyperplane section of $S$ (resp. $X$) is normal by a result of A. Seidenberg. Further, 
a Lefschetz theorem due to Hamm-Le Dung Trang (\cite{HT}) and a result in \cite{GS} imply that $H_{1}$ remains the same (Lemma 2.5). Hence we reduce to the case of $dim~S=2$ (resp. $dim~X=2$). In view of these observations we will only deal with surfaces in the rest of the paper.
  \end{remark}

\nind 
Let $S$ be as above and dim $S=2$. We will also prove the following results.\\

\nind
(1) For any singular point $p$ of $S$ the size and the entries of the intersection matrix of the exceptional curves in the minimal resolution of singularity $S'$ at $p$ are bounded by $d$.\\
Similarly, dim $H^1(S',{\mathcal O_{S'}})$ is bounded by $d$, where ${\mathcal O_{S'}}$ is the structure sheaf of $S'$.  

\nind
(2) Let $S,S'$ be as above. Then for a suitable open embedding $S'\subset X'$ into a smooth projective surface $X'$ the size and the entries of intersection matrix of the irreducible components $D_i'$ of $D':=X'-S'$ are bounded by $d$.\\   
  
\nind
(3) With $S,X,X'$ as above, order of $H_1(X';\IZ)$, $b_2(X'),K_{X'}^2,p_g(X')$ are bounded by $d$. Similarly, the rank and order of the torsion subgroup of the first homology group at infinity $H_1^{\infty}(S)$ (defined later) are bounded by $d$.\\

\nind
In the last section we have given some examples to illustrate how the hypothesis in the main theorem is necessary, and how the order of $H_1(S';\IZ)$ can tend to infinity with $d$.\\ 
  
  \section{Notations and preliminaries}
\nind
We will use the following notation.\\

  \begin{enumerate}
  \item $S$ is an irreducible closed normal affine surface in $\IA^{n}$ of degree $d$. For one closed embedding of a surface $S$ in $\IA^n$ we define degree of $S$ as the degree of its closure in $\IP^n$.
  \item $X$ is the closure of $S$ in $\IP^{n}$. 
  \item $D: = X-S$ is the boundary divisor of $S$. 
  \item $D=\sum_{i=1}^{i=r} D_{i}$ is an irreducible decomposition of $D$ where $1\leq r\leq d$. \item $P=$ the set of singular points of $X$. ($X$ may be non-normal.) 
  \item $X'$ is a resolution of singularities of $X$ such that the total transform $D'$ of $D$ is a simple normal crossing divisor in $X'$. $S'$ is a resolution of singularities of $S$. 
  \item A simple normal crossing divisor on a smooth surface will be called an {\it snc} curve.
  \item We will denote by $b_i,e(.)$ the $i^{th}$ Betti number and Euler-Poincar\'e characteristic resp. of a topological space. For a topological space $T$ and a closed subspace $Z$ of $T$ we denote the $i^{th}$ relative cohomology group with compact support for the pair by $H^i_c(T,Z)$. We will only use integer or rational coefficients for cohomology. For details about this cohomology, duality,...we refer the reader to \cite{Spanier}.
  \end{enumerate}
  
  The next result is well-known.\\
  
  \begin{lemma} Let $f: Y\to Z$ be a birational morphism of irreducible normal algebraic varieties. Then the induced homomorphism $\pi_1(Y)\to \pi_1(Z)$ is a surjection.\\
  In particular, $H_1(Y;\IZ)\to H_1(Z;\IZ)$ is a surjection.
  
  \end{lemma}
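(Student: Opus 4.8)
The plan is to use that a birational morphism is an isomorphism over a dense open set, and then to reduce the whole statement to one elementary topological fact: deleting a closed subvariety of complex codimension at least one from an irreducible variety induces a \emph{surjection} on fundamental groups. Note that no properness of $f$ is needed for this.

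First I would pick a dense open $U\subseteq Z$ over which $f$ restricts to an isomorphism, and set $V=f^{-1}(U)$, so that $f|_V\colon V\xrightarrow{\ \sim\ }U$; such a $U$ exists since $f$ is birational. As $Y$ and $Z$ are irreducible, the complements $Y\setminus V$ and $Z\setminus U$ are proper closed subvarieties, hence of complex codimension $\geq 1$. We obtain a commutative square
\[
\begin{CD}
V @>>> Y\\
@VV{\cong}V @VV{f}V\\
U @>>> Z,
\end{CD}
\]
whose horizontal arrows are the inclusions and whose left vertical arrow is a homeomorphism. Passing to $\pi_1$ (with a base point chosen in $V$, which is legitimate as $V,U$ are connected, being dense open in irreducible varieties), the relation $f_*\circ i_*=j_*\circ\phi$ holds, where $i,j$ are the inclusions and $\phi=(f|_V)_*$ is an isomorphism.

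The key step is the topological lemma applied to $U\subseteq Z$: the inclusion induces a surjection $j_*\colon\pi_1(U)\to\pi_1(Z)$. Since $Z\setminus U$ has real codimension $\geq 2$, any loop in $Z$ based in $U$ can be pushed off $Z\setminus U$ by a general-position argument, and any homotopy between such loops can be pushed off as well, so every class in $\pi_1(Z)$ comes from $\pi_1(U)$; the cleanest way to make this rigorous, given that $Z$ is only normal and not smooth, is to fix a triangulation of the pair $(Z,\,Z\setminus U)$ in which $Z\setminus U$ is a subcomplex, after which the codimension bound makes the perturbation routine. I expect this lemma, in the presence of singularities, to be the only genuine obstacle; everything else is formal.

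Finally I would chase the diagram. The composite $j_*\circ\phi\colon\pi_1(V)\to\pi_1(Z)$ is surjective, being an isomorphism followed by the surjection $j_*$. By commutativity it equals $f_*\circ i_*$, so $f_*\circ i_*$ is surjective, and therefore $f_*\colon\pi_1(Y)\to\pi_1(Z)$ is surjective, since its image already contains the image of $f_*\circ i_*$, which is all of $\pi_1(Z)$. Notably only the surjectivity of $j_*$ is used, so the topological lemma is needed only for $Z$, not for $Y$. Abelianizing $\pi_1$ then yields the claimed surjection $H_1(Y;\IZ)\to H_1(Z;\IZ)$, completing the argument.
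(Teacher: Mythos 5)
Your argument is correct, and in fact the paper offers no proof at all: it states this lemma as ``well-known'' and immediately applies it to exactly the two situations your proof covers (Zariski-open embeddings and resolutions of singularities), so the absence of any properness hypothesis that you point out is indeed essential to the paper's usage. Your proof is the standard one for this folklore fact. Two small points deserve a word. First, you assert the existence of a dense open $U\subseteq Z$ with $f|_{f^{-1}(U)}\colon f^{-1}(U)\to U$ an isomorphism; this needs a slightly finer choice than ``$f$ is an isomorphism over a dense open set,'' since a priori points of $Y$ outside the isomorphism locus could still map into $U$. The fix is routine: take $U$ to be the domain of definition of the inverse rational map $g=f^{-1}$, and set $V=f^{-1}(U)$. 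Then $g(U)\subseteq V$ because $f\circ g=\mathrm{id}_U$, and $g\circ f$ agrees with $\mathrm{id}_V$ on a dense open subset of the irreducible set $V$, hence everywhere by separatedness and reducedness; so $f|_V$ is an isomorphism onto $U$. Second, your topological lemma (surjectivity of $\pi_1(U)\to\pi_1(Z)$ when $Z\setminus U$ has real codimension $\geq 2$) is correct, and you rightly observe that only loops, not homotopies between them, need to be pushed off $Z\setminus U$ --- which is why codimension $2$ suffices and why singularities of $Z$ cause no trouble; triangulating the pair $(Z,Z\setminus U)$, or making a PL loop transverse to a Whitney stratification in which $Z\setminus U$ is a union of strata, settles it. Note also that your argument never uses normality of $Y$ or $Z$, only irreducibility and separatedness, so you have proved a slightly more general statement than the one asserted.
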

  
This applies to a Zariski-open embedding $Y\subset Z$, or to a resolution of singularities $Z'\to Z$.\\

\begin{lemma} Let $Z$ be a smooth projective irreducible surface and $D_1,D_2,..,D_r$ finitely many distinct irreducible curves on $Z$. If the homology classes $[D_i]$ are independent in $H_2(Z;\IZ)$ then $b_1(Z)=b_1(Z-\cup D_i)$.

\end{lemma}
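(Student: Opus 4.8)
The plan is to set $D:=\bigcup_{i=1}^{r}D_i$ and $U:=Z-D$, and to prove the sharper statement that the restriction map $H^1(Z;\IQ)\to H^1(U;\IQ)$ is an isomorphism; comparing dimensions then gives $b_1(Z)=b_1(U)$. The natural tool is the long exact cohomology sequence of the pair $(Z,U)$ (all with $\IQ$-coefficients),
\[
H^1(Z,U)\to H^1(Z)\to H^1(U)\xrightarrow{\ \partial\ } H^2(Z,U)\xrightarrow{\ j\ } H^2(Z),
\]
so everything reduces to understanding the two relative groups $H^1(Z,U)$, $H^2(Z,U)$ and the map $j$.

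First I would compute the relative groups by duality. Since $Z$ is a smooth projective surface it is a closed oriented $4$-manifold, and $D$ is a closed subset; Alexander--Lefschetz duality in $Z$ (see \cite{Spanier}) yields $H^k(Z,U)\cong H^{\mathrm{BM}}_{4-k}(D)$, where $H^{\mathrm{BM}}_{*}$ denotes Borel--Moore homology. As $D$ is a complex curve we have $\dim_{\IR}D=2$, so $H^{\mathrm{BM}}_3(D)=0$ and hence $H^1(Z,U)=0$; by exactness this already makes $H^1(Z)\to H^1(U)$ injective. In top degree, $H^{\mathrm{BM}}_2(D)$ is freely generated by the fundamental classes of the (compact) irreducible components $D_1,\dots,D_r$, so $H^2(Z,U)\cong H^{\mathrm{BM}}_2(D)\cong\IQ^{\,r}$.

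It then remains to show $H^1(Z)\to H^1(U)$ is surjective, which by exactness is equivalent to the injectivity of $j\colon H^2(Z,U)\to H^2(Z)$. Under the identification above together with Poincar\'e duality $H^2(Z)\cong H_2(Z)$ on $Z$, the map $j$ is the cycle-class (pushforward) map induced by the inclusion $D\hookrightarrow Z$, carrying the generator corresponding to $D_i$ to its class $[D_i]\in H_2(Z)$. By hypothesis the $[D_i]$ are independent in $H_2(Z;\IZ)$, hence $\IQ$-linearly independent in $H_2(Z;\IQ)$, so $j$ is injective. Combined with the injectivity found above, $H^1(Z;\IQ)\to H^1(U;\IQ)$ is an isomorphism, and therefore $b_1(Z)=b_1(U)$.

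The main obstacle is that the $D_i$ may meet, so $D=\bigcup D_i$ is in general singular and the usual Gysin sequence for a smooth divisor does not apply directly. The care needed is precisely to replace it by the duality computation above, which is insensitive to the singularities of $D$, and to verify that the boundary map $j$ really is the cycle-class map, so that the independence hypothesis can be invoked at exactly the right spot.
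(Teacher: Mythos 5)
Your proposal is correct and is essentially the paper's own proof read through duality: the paper applies the long exact cohomology sequence of the pair $(Z,D)$ in degrees $2$--$3$, identifies $H^3(Z,D)\cong H_1(Z-D)$ and $H^3(Z)\cong H_1(Z)$ by duality, and uses independence of the $[D_i]$ (i.e.\ injectivity of $H_2(D)\to H_2(Z)$) to conclude that $H^2(Z)\to H^2(D)$ has finite cokernel, which after passing to $\IQ$-coefficients gives $b_1(Z)=b_1(Z-\cup D_i)$. Your sequence for the pair $(Z,U)$ with $H^k(Z,U)\cong H_{4-k}(D)$ is the dual formulation of exactly this computation, with the independence hypothesis invoked at the same spot (injectivity of the cycle-class map), so the two arguments coincide in substance.
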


\begin{proof} Let $S:=Z-D$, where $D=\cup D_i$. Then $H^2(D;\IZ)\cong \IZ^r$, generated freely by the cohomology classes of $D_i$. We consider the long exact cohomology sequence with integer coefficients for the pair $(Z,D)$.
$$...H^2(Z)\to H^2(D)\to H^3(Z,D)\to H^3(Z)\to (0)....$$
We have used $H^3(D)=0$ since $D$ is a curve. By assumption, $H_2(D)\to H_2(Z)$ is injective. So $H^2(Z)\to H^2(D)$ has a finite cokernel. By duality, $H^3(Z,D)\cong H_1(S)$ and $H^3(Z)\cong H_1(Z)$. Hence the result follows.

\end{proof}

  \begin{lemma}\label{b3birinv} For irreducible normal projective surfaces $Y$, the third Betti number $b_{3}(Y)$ is a birational invariant. \cite{Montreal}. 
  \end{lemma}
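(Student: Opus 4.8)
The plan is to reduce to the smooth case via a resolution of singularities and to exploit the fact that a normal surface has only finitely many singular points. Let $\pi\colon Y'\to Y$ be a resolution of singularities, let $P=\Sing Y$ (a finite set, since $Y$ is normal), let $E=\pi^{-1}(P)$ be the exceptional curve, and let $U=Y-P=Y'-E$ be the common smooth locus. First I would show that $b_3(Y)=b_3(Y')$, and then invoke Poincar\'e duality on the smooth projective surface $Y'$, which gives $b_3(Y')=b_1(Y')$, together with the classical birational invariance of $b_1$ for smooth projective surfaces. Since any two birational normal projective surfaces admit birational smooth resolutions, this yields the claim.

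To compare $Y$ and $Y'$, I would pass through the compactly supported cohomology of $U$. Because $Y$ and $Y'$ are compact and $P$, $E$ are closed, both $H^i(Y,P)$ and $H^i(Y',E)$ are naturally isomorphic to $H^i_c(U)$, by comparing the long exact sequence of compactly supported cohomology of the open--closed decomposition with the long exact sequence of the pair (see \cite{Spanier}). Hence $H^i(Y',E)\cong H^i(Y,P)$ for all $i$. Since $P$ is finite, the long exact sequence of the pair $(Y,P)$ gives $H^i(Y,P)\cong H^i(Y)$ for $i\ge 2$; in particular $H^3(Y',E)\cong H^3(Y)$. It then remains to relate $H^3(Y',E)$ to $H^3(Y')$ through the long exact sequence of the pair $(Y',E)$, namely
$$H^2(Y')\to H^2(E)\xrightarrow{\ \delta\ } H^3(Y',E)\to H^3(Y')\to H^3(E)=0,$$
where $H^3(E)=0$ because $E$ is a curve. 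Thus over $\IQ$ one has $b_3(Y')=b_3(Y)-\dim_{\IQ}\im\delta$, and everything comes down to showing that $\delta=0$ rationally, equivalently that the restriction map $H^2(Y';\IQ)\to H^2(E;\IQ)$ is surjective.

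This surjectivity is the crux of the argument, and it is where the geometry enters. I would argue as follows. If the $E_{ij}$ are the irreducible components of $E$, then $H_2(E;\IQ)$ is freely generated by their fundamental classes, so $\dim_{\IQ}H^2(E;\IQ)$ equals the number of components. Restricting the cycle classes $[E_{kl}]\in H^2(Y';\IQ)$ to $E$ and pairing them against the fundamental classes $[E_{ij}]\in H_2(E;\IQ)$ recovers the intersection numbers $E_{kl}\cdot E_{ij}$, so the composite $\Span\{[E_{kl}]\}\hookrightarrow H^2(Y';\IQ)\to H^2(E;\IQ)$ is described by the intersection matrix of the exceptional curves. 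By the Grauert--Mumford negative-definiteness theorem this matrix is nonsingular on each connected component of $E$, hence the composite is an isomorphism and the restriction map is surjective. This forces $\delta=0$, giving $b_3(Y')=b_3(Y)$ and completing the reduction.

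The main obstacle I anticipate is precisely this last step: controlling $H^3(Y')$ requires knowing that the error term $\im\delta$ coming from the exceptional curves vanishes, and the only robust way I see to guarantee this is the negative-definiteness of the exceptional intersection form. The isomorphisms in the second paragraph are formal once one is careful with compact supports, but the surjectivity statement genuinely uses that $E$ is the exceptional locus of a resolution rather than an arbitrary curve.
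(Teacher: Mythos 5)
Your proposal is correct and follows essentially the same route as the paper: both compare $b_3(Y)$ and $b_3(Y')$ through the long exact sequences of the pairs $(Y',E)$ and $(Y,\Sing Y)$, with the key input in each case being that negative definiteness of the exceptional intersection matrix makes $H^2(Y';\IQ)\to H^2(E;\IQ)$ surjective. The only cosmetic difference is bookkeeping: you identify $H^3(Y',E)\cong H^3_c(U)\cong H^3(Y,P)$ directly, while the paper routes both groups through $H_1(Y^0)$ via duality --- the same argument in dual form.
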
 

\nind
This just uses the negative definiteness of the intersection form of the exceptional divisor of a normal surface singularity.\\
  
  \begin{proof}
  Let $Y$ be a normal projective surface (with smooth locus $Y^0$) and $(Y',D')$ its minimal snc resolution of singularities so that $D'$ is a disjoint union of connected components each corresponding to an snc resolution of corresponding singular point of $Y$. Consider the long exact cohomology sequence for the pair $(Y',D')$ with integral coefficients: $$H^2(Y')\xrightarrow{\phi} H^2(D')\to H^3(Y',D')\to H^3(Y')\to H^3(D')=0$$ 
  Then $H^2(Y')$ maps to $H^2(D')$ with finite cokernel because of negative definiteness of intersection matrix of each connected component of $D'$. We have  $H^3(Y',D')\cong H_1(Y^0)$. Tensoring the sequence with $\IQ$ we get $b_1(Y^0)=b_3(Y')$.
  
  Now consider the long exact cohomology sequence for the pair $(Y,\textrm{Sing}(Y))$ with integral coefficients, where $\textrm{Sing}(Y)$ is the finite set of singular points of $Y$:
  $$H^2(\textrm{Sing}(Y))\to H^3(Y,\textrm{Sing}(Y))\to H^3(Y)\to H^3(\textrm{Sing}(Y))$$
  But $\textrm{Sing}(Y)$ is a finite set so it has zero $H^2,H^3$ and since $H^3(Y,\textrm{Sing}(Y))\cong H_1(Y^0)$ we get $H_1(Y^0)\cong H^3(Y)$ which gives $b_1(Y^0)=b_3(Y)$. Since we already have $b_1(Y^0)=b_3(Y')$ the result is proved. 
  
  \end{proof}
  
 \begin{lemma} Let $D$ be a reduced curve of degree $d$ in $\IP^2$. Then $b_1(D)\leq (d-1)(d-2)$.
 
 \end{lemma}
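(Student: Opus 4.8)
The plan is to compare the topological first Betti number $b_1(D)$ with the arithmetic genus $p_a(D)=\binom{d-1}{2}$ through the normalization $\nu\colon\widetilde D\to D$. Write $D=\bigcup_{i=1}^{r}D_i$ for the irreducible components, set $d_i=\deg D_i$ (so $\sum_i d_i=d$), let $g_i$ be the genus of the smooth projective curve $\widetilde D_i$, and for $p\in\Sing D$ let $m_p=\#\,\nu^{-1}(p)$ be the number of local branches. First I would record two bookkeeping identities. On the topological side, $\nu$ is a homeomorphism off the singular locus and collapses $m_p$ points to one over each $p$, so $e(\widetilde D)=e(D)+\sum_p(m_p-1)$; since any two plane curves meet, $D$ is connected, so $b_0(D)=1$, while $b_2(D)=r$ (the fundamental classes of the components span $H_2$, and gluing finitely many points changes nothing in degree $2$). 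Writing $e(\widetilde D)=2r-2\sum_i g_i$ this gives
\begin{equation}\label{eq:betti}
b_1(D)=1-r+2\sum_i g_i+\sum_p(m_p-1).
\end{equation}
On the algebraic side, the conductor sequence $0\to\mathcal O_D\to\nu_*\mathcal O_{\widetilde D}\to\mathcal S\to 0$ with $\dim_{\IC}\mathcal S=\delta:=\sum_p\delta_p$ yields $p_a(D)=1-r+\sum_i g_i+\delta$, and for a plane curve $p_a(D)=\binom{d-1}{2}$.

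Subtracting the two identities gives $b_1(D)=p_a(D)+\sum_i g_i+\big(\sum_p(m_p-1)-\delta\big)$. The key local estimate is $\delta_p\ge m_p-1$: the ring $\mathcal O_{D,p}$ is local with residue field $\IC$, whereas its integral closure is the semilocal ring carrying $m_p$ maximal ideals, so the composite $\nu_*\mathcal O_{\widetilde D,p}\to\IC^{m_p}$ already forces $\mathcal S_p$ to surject onto $\IC^{\,m_p-1}$. Hence $\sum_p(m_p-1)-\delta\le 0$, and therefore
$$b_1(D)\le p_a(D)+\sum_i g_i.$$

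It then remains to bound $\sum_i g_i$. Each $\widetilde D_i$ is the normalization of a degree-$d_i$ plane curve, so $g_i\le\binom{d_i-1}{2}$, and the elementary superadditivity $\binom{a+b-1}{2}-\binom{a-1}{2}-\binom{b-1}{2}=ab-1\ge 0$ gives, by induction on $r$, the inequality $\sum_i\binom{d_i-1}{2}\le\binom{d-1}{2}$. Thus $\sum_i g_i\le p_a(D)$, and combining with the previous display we obtain $b_1(D)\le 2\,p_a(D)=(d-1)(d-2)$, as claimed. The only genuinely non-formal inputs are the local inequality $\delta_p\ge m_p-1$ and the identification $b_2(D)=r$; everything else is additivity of Euler characteristics and a convexity count, so the step I expect to require the most care is setting up the passage between the topological identity \eqref{eq:betti} and the arithmetic genus formula cleanly enough that the $\delta$-terms cancel in the right direction.
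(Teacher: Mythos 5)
Your proof is correct, but it takes a genuinely different route from the paper. The paper degenerates: it chooses a smooth curve $D_1$ of the same degree $d$ meeting $D$ in $d^2$ distinct points, blows up the base points of the pencil spanned by $D$ and $D_1$ so that both proper transforms become full fibers of a morphism $Y\to\IP^1$, and then uses that the fiber $D'$ is a strong deformation retract of a saturated tubular neighborhood (which contains nearby smooth fibers) to conclude $b_1(D)=b_1(D')\le b_1(D_1')=(d-1)(d-2)$; this is short and purely topological, but it silently relies on the nontrivial specialization fact that $H_1$ of a nearby fiber surjects onto $H_1$ of the tubular neighborhood. You instead work through the normalization: your Euler-characteristic bookkeeping giving $b_1(D)=1-r+2\sum_i g_i+\sum_p(m_p-1)$ is the standard exact formula, the conductor sequence correctly yields $p_a(D)=1-r+\sum_i g_i+\delta$, the local inequality $\delta_p\ge m_p-1$ is right (the image of the local ring $\cO_{D,p}$ in $\IC^{m_p}$ is the diagonal, so $\cS_p$ surjects onto $\IC^{m_p}/\IC$), and the convexity count $\sum_i\binom{d_i-1}{2}\le\binom{d-1}{2}$ via $\binom{a+b-1}{2}-\binom{a-1}{2}-\binom{b-1}{2}=ab-1\ge 0$ is verified easily. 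Your approach is longer but self-contained and yields strictly more information: an exact formula for $b_1(D)$ plus the sharper intermediate bound $b_1(D)\le p_a(D)+\sum_i g_i$, with equality in the final bound exactly in the smooth irreducible case, whereas the paper's pencil argument buys brevity at the cost of an unproved (though standard) degeneration input. One small point of hygiene: your argument uses connectedness of $D$ ($b_0=1$), which you correctly justify by Bezout in $\IP^2$; without that, the formula would need $b_0$ kept as a variable, so it is good that you flagged it explicitly.
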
 
 
 \begin{proof} If $D$ is smooth irreducible then this follows from genus formula for $D$. For the general case let $D_1$ be a smooth irreducible curve of degree $d$ in $\IP^2$ which meets $D$ in $d^2$ distinct points. Let $Y$ be the blow-up of $\IP^2$ at these $d^2$ points. Then there is a morphism $f:Y\to\IP^1$ such that the proper transforms $D',D_1'$ of $D,D_1$ resp. are isomorphic to $D,D_1$ resp. and are full fibers of $f$. Also, $f$ has a cross-section. Since $b_1(D_1)=(d-1)(d-2)$, using the fact that $D'$ is a strong deformation retract of a saturated tubular neighborhood we infer that $b_1(D')\leq b_1(D_1')$. This proves the result.
 
 \end{proof}
 
 \nind
 We need the next result from \cite{GS}.\\ 
  
  \begin{lemma}\label{GS}
  Let Y be a normal algebraic variety and $Y'\to{Y}$ a resolution of singularities which is an isomorphism on $Y_{reg}$. Then the map $H_1(Y_{reg};\IZ)\to H_1(Y';\IZ)$ is surjective with finite kernel. 
  \end{lemma}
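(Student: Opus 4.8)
The plan is to split the assertion into surjectivity, which is formal, and finiteness of the kernel, which is where the geometry of the singularity enters. I will concentrate on the case $\dim Y=2$, the only case needed in this paper and the one in which the argument is completely transparent.

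Surjectivity is immediate. Write $\pi\colon Y'\to Y$. Since $\pi$ is an isomorphism over $Y_{reg}$, its exceptional set $E$ is a proper closed subvariety and $Y_{reg}$ is identified with the dense Zariski-open subset $Y'-E$; when $\dim Y=2$ the set $E=\sqcup_{p}E_{p}$ is a curve, with one connected component $E_{p}=\pi^{-1}(p)$ over each singular point $p$ of $Y$, namely the exceptional divisor of the resolution of the germ $(Y,p)$. The open immersion $Y_{reg}=Y'-E\hookrightarrow Y'$ then induces a surjection on $H_1$ by Lemma 2.1.

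For the kernel I would localise around $E$. Take a regular (tubular) neighbourhood $T=\sqcup_p T_p$ of $E$ in $Y'$; then $T$ deformation retracts onto $E$, the punctured neighbourhood $T-E$ deformation retracts onto the link $L=\partial T=\sqcup_p L_p$, and $Y'=(Y'-E)\cup T$ with $(Y'-E)\cap T=T-E\simeq L$. Feeding this cover into the Mayer--Vietoris sequence, any $a\in H_1(Y_{reg})$ dying in $H_1(Y')$ is the image of some $c\in H_1(L)$ whose image in $H_1(T)=H_1(E)$ vanishes. Hence $\ker\!\big(H_1(Y_{reg};\IZ)\to H_1(Y';\IZ)\big)$ lies in the image, under the map $H_1(L)\to H_1(Y_{reg})$, of $\ker\!\big(H_1(L;\IZ)\to H_1(E;\IZ)\big)$, and it suffices to prove the latter group is finite.

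This is the one genuinely geometric step, and I expect it to be the main obstacle. It may be checked one component $E_p$ at a time: the homology exact sequence of the pair $(T_p,L_p)$ yields
\[
\ker\!\big(H_1(L_p)\to H_1(T_p)=H_1(E_p)\big)\ \cong\ \operatorname{coker}\!\big(H_2(T_p)\xrightarrow{\,q\,}H_2(T_p,L_p)\big).
\]
Using Lefschetz duality $H_2(T_p,L_p)\cong H^2(T_p)\cong H^2(E_p)$ together with $H_2(T_p)\cong H_2(E_p)$ (free, generated by the fundamental classes of the irreducible components $E_{p,i}$), the map $q$ is identified with the intersection pairing of the curves $E_{p,i}$, i.e. with the intersection matrix of the resolution of $(Y,p)$. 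This matrix is negative definite by Grauert--Mumford, hence nonsingular, so its cokernel is finite; therefore each $\ker(H_1(L_p)\to H_1(E_p))$ is finite, and summing over the finitely many singular points finishes the surface case. The crucial input is exactly the negative-definiteness flagged in the observation following Lemma \ref{b3birinv}, since it is what turns an a priori free relative $H_2$ into a finite cokernel. For $\dim Y>2$ one runs the same tubular-neighbourhood and Mayer--Vietoris formalism, the essential point again being nondegeneracy of the intersection pairing carried by the fibres of $\pi$ over $\mathrm{Sing}\,Y$; this is the content of \cite{GS}.
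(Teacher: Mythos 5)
The first thing to say is that the paper contains no proof of this lemma at all: it is quoted verbatim from \cite{GS}, so there is no internal argument to compare against, and your proposal has to be judged on its own merits. In the surface case your argument is correct and complete. The Mayer--Vietoris localisation of the kernel into the image of $\ker\bigl(H_1(L;\IZ)\to H_1(E;\IZ)\bigr)$, the identification of that kernel with $\operatorname{coker}\bigl(H_2(T_p)\to H_2(T_p,L_p)\bigr)$ via the pair sequence and Lefschetz duality, and the finiteness via negative definiteness of the intersection matrix $(E_{p,i}\cdot E_{p,j})$ (Mumford \cite{Mumford}) are all sound; this is in effect Mumford's classical computation of the homology of the link, localised at the finitely many singular points. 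It also runs exactly parallel to the step the paper itself uses in the proof of Lemma~\ref{b3birinv}, where negative definiteness is what forces $H^2(Y')\to H^2(D')$ to have finite cokernel: your homological computation and the paper's cohomological one are dual forms of the same mechanism. Your route even buys something the bare citation does not, namely a quantitative statement --- the kernel has order dividing $\prod_p\,\lvert\det(E_{p,i}\cdot E_{p,j})\rvert$, which fits the paper's theme of explicit bounds.

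There is, however, a genuine gap relative to how the lemma is actually used. Your claim that $\dim Y=2$ is ``the only case needed in this paper'' is false: Lemma~\ref{GS} is invoked in the proof of Lemma 2.6 (and implicitly in Remark 1.1) for a variety of dimension $3$ and higher, precisely to carry finiteness of $H_1(S')$ over to $H_1(S_{reg})$ \emph{before} slicing by hyperplanes, so the higher-dimensional case cannot be discharged by the reduction it is meant to enable. Your closing sentence does not repair this: in dimension $\geq 3$ the exceptional set has components of assorted dimensions, there is no intersection matrix of exceptional curves, and no analogue of negative definiteness feeds into your cokernel computation, so ``the same tubular-neighbourhood and Mayer--Vietoris formalism'' does not go through verbatim. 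What does persist in general is that the kernel of $H_1(Y_{reg};\IZ)\to H_1(Y';\IZ)$ is generated by the meridian classes of the codimension-one exceptional components (compare the sequence of the pair $(Y',Y_{reg})$ with a Thom-class computation of the relative groups); showing those finitely many classes are torsion then requires a separate input, for instance generic hyperplane slicing via \cite{HT} to reduce to your surface argument, or the argument of \cite{GS} itself. As written, then, your proposal is a valid and usefully explicit proof of the surface case, but only of that case.
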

  
\nind
In order to reduce the proof of the main theorem to surfaces we need the following result which uses the above lemma.\\

\begin{lemma} Let $S$ be an irreducible normal affine variety of dimension $>2$. Suppose
that for a resolution of singularities $S'\to S$ we have $b_1(S')=0$. Let $S_0$ be the irreducible normal affine surface obtained by intersecting $S$ repeatedly by general hyperplane sections. Then for a resolution of singularities $S_0'\to S_0$ we have $b_1(S_0')=0$. 

\end{lemma}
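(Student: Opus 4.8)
The plan is to argue by induction on $\dim S$, so it suffices to treat a single general hyperplane section. Write $m:=\dim S\ge 3$, fix a general hyperplane $H$, set $S_1:=S\cap H$, and show that a resolution $S_1'$ of $S_1$ has $b_1(S_1')=0$; iterating then produces the surface $S_0$. By Bertini's irreducibility theorem $S_1$ is irreducible, and since $m-1\ge 2$ the theorem of Seidenberg guarantees that $S_1$ is again normal, so the inductive hypothesis applies to it once one step is established.

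The first move is to replace resolutions by smooth loci. By Lemma~\ref{GS}, for any resolution $Y'\to Y$ of a normal variety the map $H_1(Y_{\mathrm{reg}};\IZ)\to H_1(Y';\IZ)$ is surjective with finite kernel; tensoring with $\IQ$ kills the finite kernel, so $b_1(Y')=b_1(Y_{\mathrm{reg}})$. Applying this to $Y=S$ and to $Y=S_1$, the desired implication $b_1(S')=0\Rightarrow b_1(S_1')=0$ becomes the purely topological assertion $b_1(S_{\mathrm{reg}})=0\Rightarrow b_1((S_1)_{\mathrm{reg}})=0$. Next I compare $(S_1)_{\mathrm{reg}}$ with $S_{\mathrm{reg}}\cap H$. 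By Bertini applied to the smooth variety $S_{\mathrm{reg}}$, the intersection $S_{\mathrm{reg}}\cap H$ is smooth, so $S_{\mathrm{reg}}\cap H\subseteq (S_1)_{\mathrm{reg}}$ and $\Sing S_1\subseteq \Sing S\cap H$. The difference $(S_1)_{\mathrm{reg}}\setminus(S_{\mathrm{reg}}\cap H)$ lies in $\Sing S\cap H$, a closed set of dimension at most $\dim\Sing S-1\le m-3=\dim S_1-2$; removing a closed subset of complex codimension $\ge 2$ from the smooth variety $(S_1)_{\mathrm{reg}}$ leaves $\pi_1$, and hence $H_1$, unchanged, so $b_1((S_1)_{\mathrm{reg}})=b_1(S_{\mathrm{reg}}\cap H)$.

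The heart of the argument is the Lefschetz theorem of Hamm--Lê~\cite{HT} applied to the smooth quasi-affine variety $S_{\mathrm{reg}}$, which has rectified homotopical depth equal to its dimension $m$. For a general hyperplane $H$ the pair $(S_{\mathrm{reg}},\,S_{\mathrm{reg}}\cap H)$ is $(m-1)$-connected, so from the long exact homotopy sequence of the pair the inclusion induces an isomorphism $\pi_1(S_{\mathrm{reg}}\cap H)\xrightarrow{\sim}\pi_1(S_{\mathrm{reg}})$, since $1\le m-2$; passing to abelianizations gives $H_1(S_{\mathrm{reg}}\cap H;\IZ)\cong H_1(S_{\mathrm{reg}};\IZ)$, and in particular $b_1(S_{\mathrm{reg}}\cap H)=b_1(S_{\mathrm{reg}})$. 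Chaining the three identities yields
$$b_1(S_1')=b_1((S_1)_{\mathrm{reg}})=b_1(S_{\mathrm{reg}}\cap H)=b_1(S_{\mathrm{reg}})=b_1(S')=0,$$
which closes the induction.

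The main obstacle is precisely this last step. The elementary affine Lefschetz theorem needs an affine (Stein) total space, whereas $S_{\mathrm{reg}}$ is only the complement of the codimension-$\ge 2$ locus $\Sing S$ in the affine variety $S$ and is therefore typically non-affine; indeed the surjection $H_1(S_{\mathrm{reg}})\to H_1(S)$ need not be injective (quotient singularities already exhibit loops around $\Sing S$ that die in $S$), so one cannot substitute the full variety $S$ for $S_{\mathrm{reg}}$ and must genuinely work with the smooth locus. This is exactly what the quasi-projective form of Hamm--Lê supplies, and the care needed is to confirm that $S_{\mathrm{reg}}$ has full rectified homotopical depth and that $H$ is generic with respect to a Whitney stratification of $S$ adapted to $\Sing S$.
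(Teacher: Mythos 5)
Your proposal is correct and takes essentially the same route as the paper's proof: both use Lemma~\ref{GS} to pass from the resolution to the smooth locus, apply the Hamm--L\^e Lefschetz theorem \cite{HT} to $S_{\mathrm{reg}}$ and a general hyperplane, and return to a resolution of $S\cap H$ via a Zariski-open embedding, iterating down to a surface. The only difference is expository: you spell out (via Bertini, Seidenberg, and the codimension-$\ge 2$ comparison of $S_{\mathrm{reg}}\cap H$ with $(S\cap H)_{\mathrm{reg}}$) the steps that the paper compresses into its citation of \cite{HT}.
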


\begin{proof} For simplicity we give a proof when dim $S=3$.\\
\nind
For any subset $Z$ of $S$ we denote by $Z'$ its inverse image in $S'$. By hypothesis, $H_1(S')$ is finite. Hence by Lemma ~\ref{GS}, $H_1(S_{reg})$ is finite. By \cite{HT}, $H_1(S_{reg}\cap H)\cong H_1((S\cap{H})_{reg})$. But $(S\cap H)_{reg}$ is embedded in $(S\cap H)'$ as a Zariski open set. Hence $H_1((S\cap H)')$ is finite showing that $b_1(S_0')=0$ where $S_0=S\cap H$. 
  
\end{proof}  

  \begin{lemma}\cite{Varchenko}\label{Varchenko}
  Consider a set of $q$ homogeneous polynomials
  $$T_i^1(z_1^0,\dots,z_{n+1}^0,t),\dots T_i^{m_i}(z_1^0,\dots,z_{n+1}^0,t);\  i=1,\dots,q$$
  in the variables $z_1^0,\dots,z_{n+1}^0$ whose coefficients depend polynomially on $t\in\IC^l$. Consider the direct product $\IP^n\times\IC^l$ with projection $p$ on the second factor. In the space $\IP^n\times\IC_j^l$ distinguish $q$ subsets $E^1,\dots,E^q$, 
  $$E^i=\{(z_1^0:\dots:z_{n+1}^0,t): T_i^j(z_1^0,\dots,z_{n+1}^0,t)=0\}$$
  Then there exists a proper algebraic subset $A$ of $\IC^l$ such that the family $(p,\IP^n\times\IC^l,E^1,\dots,E^l)$ is equisingular outside $A$, i.e. the family $(p,\IP^n\times\IC^l,E^1,\dots,E^l)$ restricted to the inverse image of $\IC^l\setminus{A}$ is a (topologically) locally trivial family. 
  
  \end{lemma}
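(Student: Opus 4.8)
The plan is to deduce this from the standard machinery of Whitney stratifications together with Thom's first isotopy lemma. First I would form the closed algebraic subsets $E^1,\dots,E^q$ inside $\IP^n\times\IC^l$ and, since everything in sight is algebraic, stratify the morphism $p$: by the stratification theory of algebraic maps (Whitney, Thom, Mather, Hironaka, {\L}ojasiewicz, Verdier) there exist finite Whitney $(b)$-regular stratifications $\{\Sigma_\al\}$ of the source $\IP^n\times\IC^l$ and $\{\Lambda_\be\}$ of the target $\IC^l$, compatible in the sense that each $E^i$ is a union of source strata and that $p$ maps every source stratum $\Sigma_\al$ submersively onto a single target stratum $\Lambda_\be$.

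The second step is to read off the bad locus. Let $A\subset\IC^l$ be the union of all target strata $\Lambda_\be$ of dimension strictly less than $l$. This is a finite union of proper algebraic subsets, hence a proper algebraic subset of $\IC^l$, and its complement $\IC^l\setminus A$, which is connected, is the open dense top-dimensional stratum. By construction every source stratum that meets $p^{-1}(\IC^l\setminus A)$ is mapped submersively onto $\IC^l\setminus A$.

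The third step is where compactness enters. Because $\IP^n$ is compact, the projection $p\colon\IP^n\times\IC^l\to\IC^l$ is proper, and it remains proper after restriction to $p^{-1}(\IC^l\setminus A)$. We therefore have a proper map whose source carries a Whitney stratification, each stratum of which is mapped submersively onto $\IC^l\setminus A$. Thom's first isotopy lemma applies and yields, over a neighbourhood $U$ of any point of $\IC^l\setminus A$, a stratum-preserving homeomorphism between $p^{-1}(U)$ and a product $F\times U$, where $F=p^{-1}(t_0)\cong\IP^n$. Being stratum-preserving, this homeomorphism carries each $E^i\cap p^{-1}(U)$ onto $(E^i\cap F)\times U$, which is precisely the topological local triviality claimed for the family $(p,\IP^n\times\IC^l,E^1,\dots,E^q)$ over $\IC^l\setminus A$.

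The step I expect to be the main obstacle is the construction of the stratified morphism, namely that one can stratify source and target simultaneously, compatibly with all the $E^i$, so that a single proper algebraic $A$ controls submersivity on every stratum at once; this is exactly the content of the stratification theorems invoked above, and is where the real work hides. The role of compactness is essential and should not be skipped: without the properness furnished by $\IP^n$, a stratified submersion need not be locally trivial. As an alternative route that avoids Thom's lemma, one can apply Hardt's semialgebraic triviality theorem to $p$ directly, obtaining a semialgebraic local trivialization over the strata of a semialgebraic partition of $\IC^l$; discarding the lower-dimensional strata again produces the required proper algebraic subset $A$.
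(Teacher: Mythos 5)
The paper contains no proof of this lemma: it is quoted verbatim from Varchenko's paper \cite{Varchenko} as an external input, so there is no internal argument to compare yours against, and the right question is simply whether your blind proof is sound. It essentially is, and it is the now-standard route: every morphism of complex algebraic varieties admits compatible Whitney stratifications of source and target refining any finite family of algebraic subsets, with each source stratum submersing onto a target stratum (Verdier's Bertini--Sard theorem, or the Thom--Mather--Hironaka stratification theory you cite); the union $A$ of the target strata of dimension $<l$ is a proper closed algebraic subset by the frontier condition; $p$ is proper because $\IP^n$ is compact; and Thom's first isotopy lemma then yields stratum-preserving local trivializations over $\IC^l\setminus A$, which carry each $E^i$ along because each $E^i$ is a union of strata. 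Note that this is genuinely different from Varchenko's original argument, which deliberately avoids the isotopy lemma and instead constructs the trivializing homeomorphisms directly by an inductive, effectively algebraic procedure (this is what lets him get stronger, semialgebraic equisingularity statements); your version trades that explicitness for the black-boxed stratification theorems, which is perfectly adequate for the way the lemma is used in this paper (Theorems 4.11 and the stratification argument of Section 5 need only topological local triviality and the algebraicity of $A$).

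One genuine caveat concerns your proposed alternative via Hardt's theorem: Hardt triviality partitions $\IC^l=\IR^{2l}$ into \emph{semialgebraic} pieces, and the complement of the union of the top-dimensional pieces is a closed semialgebraic set of real dimension $<2l$, which need not be contained in any proper \emph{complex algebraic} subset of $\IC^l$ (for instance $\IR^l\subset\IC^l$ has real dimension $l<2l$ but complex Zariski closure all of $\IC^l$). So "discarding the lower-dimensional strata" does not by itself produce the algebraic set $A$ that the statement requires; some additional argument (e.g. constructibility of the locus of topological triviality, or falling back on the algebraic stratification) is needed to upgrade the bad set to an algebraic one. Your main route is unaffected by this, since there the stratification of the target is algebraic from the start.
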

  
  \section{Smooth completion case}
  
  We first prove the main theorem when $X$ is a smooth surface in $\IP^3$. 
  
  \noindent As in the introduction, $X$ is the completion of an affine surface $S\subset\IA^3$.  
  
  \begin{theorem} If the completion $X$ of $S$ is smooth then $\pi_{1}(S)=0$, $H^{3}(X)=0$ and $b_{2}(S)\leq (d-1)^{3}$. 
  \end{theorem}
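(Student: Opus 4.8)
The plan is to treat the three assertions separately, in increasing order of difficulty, with the simple-connectivity of $S$ being the real point. Throughout I write $D=X-S=\bigcup_{i=1}^{r}D_{i}$ for the section at infinity (the intersection of $X$ with the hyperplane $H_\infty$), and set $e_{i}=\deg D_{i}$, so $\sum_{i}e_{i}=d$. First the two statements about $X$ itself. Since $X$ is a smooth hypersurface of dimension $2$ in $\IP^{3}$, the Lefschetz hyperplane theorem gives $\pi_{1}(X)\cong\pi_{1}(\IP^{3})=0$; hence $H_{1}(X;\IZ)=0$, and Poincar\'e duality on the smooth projective surface $X$ yields $H^{3}(X;\IZ)\cong H_{1}(X;\IZ)=0$. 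In particular $b_{1}(X)=b_{3}(X)=0$, which is what is needed below.

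Next I would deduce $b_{1}(S)=0$ by showing the classes $[D_{i}]$ are linearly independent in $H_{2}(X;\IZ)$ and invoking the lemma that $b_{1}(Z)=b_{1}(Z-\bigcup D_{i})$ when the $[D_{i}]$ are independent (applied to $Z=X$, using $b_{1}(X)=0$). The key is to compute the intersection matrix $M=(D_{i}\cdot_{X}D_{j})$ explicitly. Because $X$ is smooth of degree $\ge 2$ its Gauss map is finite, so no plane is tangent to $X$ along a curve and $D$ is reduced. Each $D_{i}$ is an irreducible plane curve of degree $e_{i}$, with arithmetic genus $(e_{i}-1)(e_{i}-2)/2$; adjunction on $X$ (with $K_{X}=(d-4)H$) then gives $D_{i}^{2}=e_{i}(e_{i}+1-d)$. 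For $i\ne j$ and $p\in D_{i}\cap D_{j}$ the surfaces $X$ and $H_\infty$ must be tangent at $p$ (else $D$ would be smooth there), and projecting $X$ to $H_\infty$ is a local isomorphism identifying the two branches, so the local intersection numbers on $X$ and in the plane agree; hence $D_{i}\cdot_{X}D_{j}=e_{i}e_{j}$ by B\'ezout. Thus $M=\mathbf e\,\mathbf e^{\top}-(d-1)\,\mathrm{diag}(e_{1},\dots,e_{r})$, and the matrix–determinant lemma gives $\det M=(-1)^{r+1}(d-1)^{r-1}\prod_{i}e_{i}\ne 0$. So the $[D_{i}]$ are independent and $b_{1}(S)=0$.

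The hard part is upgrading $b_{1}(S)=0$ to $\pi_{1}(S)=0$, since a smooth affine surface can carry a nontrivial perfect fundamental group. For this I would use a Lefschetz pencil of hyperplane sections of $X$ containing $H_\infty$: blowing up its (reduced) base points, which all lie on $D$, produces $f\colon\widetilde X\to\IP^{1}$ with $\pi_{1}(\widetilde X)=\pi_{1}(X)=0$, in which $\widetilde D$ is one fibre and the $d$ exceptional curves are sections, and $S$ is identified with $\widetilde X$ minus that fibre and those sections. Restricting $f$ exhibits $S$ as fibred over $\IA^{1}$ with fibre a smooth affine curve, and a Zariski–van Kampen analysis presents $\pi_{1}(S)$ as the fundamental group of the general fibre modulo the vanishing-cycle relations from the singular fibres; the vanishing of $\pi_{1}(\widetilde X)$ forces those relations to kill everything. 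Making this monodromy argument precise — in particular controlling the contribution of the possibly singular fibre $\widetilde D$ at infinity — is where I expect the main obstacle to lie.

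Finally, the bound on $b_{2}(S)$ is a Betti/Euler-characteristic count once $b_{1}(S)=0$ is known. A smooth degree-$d$ surface has $e(X)=d^{3}-4d^{2}+6d$, while a smooth affine surface has the homotopy type of a $2$-complex, so $e(S)=1+b_{2}(S)$. By additivity $e(S)=e(X)-e(D)$, and for the plane curve $D$ one has $e(D)=b_{0}(D)-b_{1}(D)+b_{2}(D)\ge 1-(d-1)(d-2)+1$, using that $D$ is connected, that $b_{2}(D)=r\ge 1$, and the bound $b_{1}(D)\le(d-1)(d-2)$. Substituting,
$$b_{2}(S)=e(X)-e(D)-1\le d^{3}-3d^{2}+3d-1=(d-1)^{3},$$
with equality precisely when $D$ is smooth.
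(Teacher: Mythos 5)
The decisive gap is in your third step: the theorem's first assertion, $\pi_{1}(S)=0$, is never actually proved. Your Zariski--van Kampen sketch ends with an admitted obstacle, and the guiding heuristic --- that $\pi_{1}(\widetilde X)=0$ ``forces the vanishing-cycle relations to kill everything'' --- is false as stated: simple connectivity of the ambient surface does not control $\pi_1$ of the complement of a fibre plus sections. The standard counterexamples already live on $\IP^{2}$: although $\pi_{1}(\IP^{2})=0$, Zariski's sextic $C$ with six cusps on a conic has $\pi_{1}(\IP^{2}-C)\cong \IZ/2 * \IZ/3$, an infinite nonabelian group with $b_{1}=0$; the van Kampen relations from a pencil kill $\pi_1$ of the total space but need not kill $\pi_1$ of the open surface, precisely because deleting the fibre at infinity and the sections introduces the very generators your relations would have to handle, and the monodromy contribution of the non-generic member $\widetilde D$ is where all the difficulty sits. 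The paper does not attempt this route: it invokes a genuinely nontrivial input, Nori's weak Lefschetz theorem (Example 6.8 of \cite{Nori}), which applies to the complement of a hyperplane section of a smooth hypersurface in $\IP^{3}$ and yields $\pi_{1}(S)=0$ outright. Without Nori's theorem or an equivalent, your argument establishes only $b_{1}(S)=0$, which is strictly weaker.

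The parts you do complete are correct, and partly by a different route than the paper. The deduction $H^{3}(X;\IZ)\cong H_{1}(X;\IZ)=0$ from Lefschetz plus Poincar\'e duality is the intended one. Your explicit computation of the intersection matrix $M=\mathbf e\,\mathbf e^{\top}-(d-1)\,\mathrm{diag}(e_{1},\dots,e_{r})$ is sound: the tangency/local-isomorphism argument for $D_{i}\cdot_{X}D_{j}=e_{i}e_{j}$ works (at a point of $D_i\cap D_j$ the section is singular, so $T_pX=H_\infty$ and a linear projection transverse to $H_\infty$ is a local isomorphism fixing $D$ pointwise), reducedness of $D$ via finiteness of the Gauss map is the right point to flag, and the determinant $(-1)^{r+1}(d-1)^{r-1}\prod_i e_i\neq 0$ (for $d\geq 2$; $d=1$ is trivial) gives independence of the $[D_i]$ in $H_2(X;\IZ)$, so Lemma 2.2 applies. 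This is more explicit than anything in the paper, which never needs it in the smooth case since $H_{1}(S)=0$ falls out of Nori's result. Your Euler-characteristic count for $b_{2}(S)$ is equivalent to the paper's count via the exact cohomology sequence of the pair $(X,D)$: both rest on $e(X)=d^{3}-4d^{2}+6d$ (equivalently $b_{2}(X)=d^{3}-4d^{2}+6d-2$), connectedness of $D$, and the bound $b_{1}(D)\leq(d-1)(d-2)$ of Lemma 2.4, and both yield $(d-1)^{3}$; note, however, that in your architecture this step is legitimately supported only by $b_{1}(S)=0$ from your intersection-matrix argument, so it survives the gap above, while the full claim $\pi_{1}(S)=0$ does not.
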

  
  \begin{proof} By a result of Nori (Example 6.8 in his paper on Zariski conjecture \cite{Nori}) we know that $\pi_{1}(S)=0$. Since $S$ is an affine surface, the third and fourth integral homology groups vanish for $S$ owing to the fact that the homotopy type of $S$ is a CW-complex of real dimension at most $2$. There is no torsion in $H^{2}(S)\cong Hom(H_{2}(S);\IZ)$ since $H_{1}(S)=0$. We give a bound on $b_{2}(S)$ to finish. 
  For any smooth hypersurface $X$ of degree $d$, $b_{2}(X)=d^{3}-4d^{2}+6d-2$. 
  The long exact sequence for cohomology of the pair $(X,D)$ with rational coefficients gives: 
  $$ H^{1}(X)\to H^{1}(D)\to H^{2}(X,D)\to H^{2}(X)\to H^{2}(D)\to H^{3}(X,D)\to 0$$
  
  which gives by duality (using $H_1(S)=0$)
  $$0\to H^{1}(D)\to H_{2}(S)\to H^{2}(X)\to \oplus_{i=1}^{r} H^{2}(D_{i})\to 0$$
  
  This shows that $H^{2}(X)$ surjects onto $H^{2}(D)$ and $$\text{rank\ }H_{2}(S) = \text{rank\ }H^{2}(X)-r + \text{rank\ }H^{1}(D)$$ i.e. $b_{2}(S)\leq b_{2}(X)+b_{1}(D)-1$.  But $b_{1}(D)\leq (d-1)(d-2)$ (Lemma 2.4) which gives finally $b_{2}(S)\leq (d-1)^{3}$. 
  
  \end{proof} 
    
    This finishes the argument for smooth $X$. 
    
 \begin{remark}
 More generally, it is known that if $S$ is normal and $X$ is smooth outside $S$ then $\pi_1(S)=(1)$. \cite{Dimca}
 \end{remark}

 \section{Normal completion $X\subset\IP^3$}
 
 Now assume that $S$ is a normal affine surface in $\IA^3$ and its closure $X$ in $\IP^3$ is normal. In this case $X$ has only finitely many singularities whose number we prove is bounded by a function of $d$ and the various characteristics of any singularity like multiplicity, Milnor number, Tjurina number, etc. are also bounded by functions of the degree $d$. 
 
 Let $P$ be the set of singular points of $X$. We show first that cardinality of $P$ is bounded by a function of $d$. Let $e(\cdot)$ denote the topological Euler characteristic. 

 We continue with the assumption that $b_1(S')=0$ for a resolution of singularities $S'$ of $S$. 

\begin{lemma}\label{b3is0} $b_{3}(X)=0$ and $H^{3}(X;\IZ)\cong \textrm{Torsion\ } H_{2}(X;\IZ)$. 
 \end{lemma}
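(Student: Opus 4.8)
The plan is to derive both assertions from a single fact, namely that $b_3(X)=0$, which I would extract from the hypothesis $b_1(S')=0$ by combining the birational invariance of the third Betti number with the surjectivity of $H_1$ under an open immersion.

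First I would establish $b_3(X)=0$. Since $X$ is an irreducible normal projective surface and $X'\to X$ is a resolution of singularities, they are birational, so Lemma~\ref{b3birinv} gives $b_3(X)=b_3(X')$. As $X'$ is a smooth projective surface (real dimension $4$), Poincar\'e duality yields $b_3(X')=b_1(X')$, so it suffices to prove $b_1(X')=0$. For this I would use that, in the notation of Section~2, $S'=X'-D'$ is a Zariski-open subvariety of $X'$; by Lemma~2.1 the inclusion induces a surjection $H_1(S';\IZ)\twoheadrightarrow H_1(X';\IZ)$. The hypothesis $b_1(S')=0$ says that the finitely generated group $H_1(S';\IZ)$ has rank $0$, hence is finite, and a surjective image of a finite group is finite; therefore $H_1(X';\IZ)$ is finite and $b_1(X')=0$. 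Tracing back, $b_3(X)=b_3(X')=b_1(X')=0$.

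Next I would pass to integral cohomology using the universal coefficient theorem, which supplies a short exact sequence
$$0\to \mathrm{Ext}(H_2(X;\IZ),\IZ)\to H^3(X;\IZ)\to \mathrm{Hom}(H_3(X;\IZ),\IZ)\to 0.$$
Because $X$ is compact its homology is finitely generated, and $b_3(X)=0$ forces $H_3(X;\IZ)$ to be finite, so $\mathrm{Hom}(H_3(X;\IZ),\IZ)=0$ and the left arrow is an isomorphism. Finally, for any finitely generated abelian group $A$ one has $\mathrm{Ext}(A,\IZ)\cong \mathrm{Torsion}\,A$, and applying this with $A=H_2(X;\IZ)$ gives $H^3(X;\IZ)\cong \mathrm{Torsion}\,H_2(X;\IZ)$, as desired.

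The only substantive step is the first: the whole lemma hinges on transferring the topological hypothesis $b_1(S')=0$ on the open surface $S'$ to the vanishing of $b_1(X')$ on its smooth completion. I expect this transfer to be the main obstacle, and it rests on the two Section~2 inputs quoted above (Lemma~2.1 for the $H_1$-surjection across the open immersion $S'\subset X'$, and Lemma~\ref{b3birinv} for the birational invariance of $b_3$). Once $b_3(X)=0$ is in hand, the passage to $H^3(X;\IZ)$ is a purely formal universal-coefficient computation.
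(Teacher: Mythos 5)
Your proposal is correct and takes essentially the same route as the paper: the chain $b_1(S')=0 \Rightarrow b_1(X')=0 \Rightarrow b_3(X')=0$ (Poincar\'e duality) $\Rightarrow b_3(X)=0$ (birational invariance, Lemma~\ref{b3birinv}), followed by the universal coefficient theorem. The only difference is that you make explicit, via the Lemma~2.1 surjection $H_1(S';\IZ)\twoheadrightarrow H_1(X';\IZ)$ for the open immersion, the step $b_1(S')=0\Rightarrow b_1(X')=0$ that the paper asserts without comment---a worthwhile clarification, not a different argument.
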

 
 \begin{proof} By our assumption $b_1(S')=0$, where $S'$ is a resolution of singularities of $S$. It follows that for a resolution of singularities $X'$ of $X$ we have $b_1(X')=0$. By duality, $b_3(X')=0$. Since $b_3$ is a birational invariant for normal projective surfaces by Lemma~\ref{b3birinv}, $b_3(X)=0$. 
 
 By the Universal Coefficient Theorem $H^3(X;\IZ)\cong Hom(H_3(X;\IZ),\IZ)\oplus \text{Torsion\ } H_2(X;\IZ)$. Since $b_3(X)=0$ we get the second part of the assertion. 
 
 \end{proof}
 
 \begin{lemma} The number of singular points of $X$ is bounded by $3(d-1)^3$ and the second Betti number $b_{2}(X)$ is  bounded by $d^3-4d^{2}+6d-2$. 
 \end{lemma}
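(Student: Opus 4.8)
The plan is to compare $X$ with a smooth surface $X_\infty$ of the same degree $d$ in $\IP^3$ by way of the topological Euler characteristic $e$, and to read off both bounds from a single identity relating $e(X)$, $e(X_\infty)$ and the local Milnor numbers at the points of $\Sing X$. Since $X$ is normal its singularities are isolated and finite in number, so this comparison is controlled purely by local data at $\Sing X$.

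First I would express $e(X)$ through $b_2(X)$. As $X$ is irreducible projective of dimension $2$ we have $b_0(X)=b_4(X)=1$, and $b_3(X)=0$ by Lemma~\ref{b3is0}. The hypothesis $b_1(S')=0$ forces $b_1(X')=0$ for a resolution $X'\to X$ (as already noted in Lemma~\ref{b3is0}), and since a birational morphism of normal varieties induces a surjection on $H_1$, the group $H_1(X;\IZ)$ is a quotient of the finite group $H_1(X';\IZ)$, whence $b_1(X)=0$. Therefore $e(X)=2+b_2(X)$. The identical count for a smooth degree-$d$ surface $X_\infty$, together with $b_2(X_\infty)=d^3-4d^2+6d-2$ (the value recorded in Section~3), gives $e(X_\infty)=d^3-4d^2+6d$.

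Next I would degenerate a smooth surface to $X$. Smooth surfaces form a dense Zariski-open subset of the projective space of degree-$d$ forms, so a generic disc through $[X]$ carries a family $X_t$ with $X_0=X$ and $X_t$ smooth for $t\ne 0$; by Varchenko's equisingularity theorem (Lemma~\ref{Varchenko}) this family is topologically locally trivial over the punctured disc, so all $X_t$ with $t\ne0$ are homeomorphic to $X_\infty$, and away from $\Sing X$ the degeneration is a product. Excising small balls $B_p$ about the points $p\in\Sing X$, the outer pieces of $X_0$ and $X_t$ then have equal Euler characteristic, $X\cap B_p$ is a contractible cone, and $X_t\cap B_p$ is the Milnor fibre $F_p$; the links cancel, and since a surface Milnor fibre is homotopy equivalent to a wedge of $\mu_p$ two-spheres one obtains
$$e(X_\infty)-e(X)=\sum_{p\in\Sing X}\bigl(e(F_p)-1\bigr)=\sum_{p\in\Sing X}\mu_p .$$
Combined with the previous paragraph this reads $b_2(X)=b_2(X_\infty)-\sum_p\mu_p$. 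As every $\mu_p\ge1$ this at once gives $b_2(X)\le d^3-4d^2+6d-2$, and as $b_2(X)\ge1$ (the class of a hyperplane section is nonzero) it gives
$$\#\Sing X\le\sum_{p\in\Sing X}\mu_p=b_2(X_\infty)-b_2(X)\le d^3-4d^2+6d-3,$$
which is at most $3(d-1)^3$ since $3(d-1)^3-(d^3-4d^2+6d-3)=d(2d-3)(d-1)\ge0$ for $d\ge1$.

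The main obstacle is the Euler-characteristic identity of the third paragraph: it requires knowing that a generic degree-$d$ surface is smooth, that the degeneration is topologically trivial away from $\Sing X$ (supplied by Lemma~\ref{Varchenko}), and the standard fact that the Milnor fibre of an isolated surface singularity has Euler characteristic $1+\mu_p$. Should one prefer a self-contained bound for the number of singular points, I would instead note that $\Sing X$ is the finite common zero locus of the four partials $\partial_0F,\dots,\partial_3F$, each of degree $d-1$ in $\IP^3$; cutting by three generic linear combinations keeps the locus finite and still contains $\Sing X$, so Bézout bounds $\#\Sing X$ by $(d-1)^3\le 3(d-1)^3$.
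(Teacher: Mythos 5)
Your proof is correct, and for the bound on $b_2(X)$ it is essentially the paper's argument: the paper likewise compares $e(X)$ with $e(X_0)=d^3-4d^2+6d$ for a smooth degree-$d$ surface through the identity $e(X)=e(X_0)-\sum_p\mu_p$ and converts it into $b_2(X)\le d^3-4d^2+6d-2$ using $b_3(X)=0$ (Lemma~\ref{b3is0}) and $b_1(X)=0$; you are in fact more careful than the paper here, since you justify $b_1(X)=0$ via Lemma 2.1 applied to $X'\to X$, a step the paper uses silently. The genuine differences are two. First, the paper simply cites \cite{Dimca1986} for the Euler-characteristic/Milnor-number identity, whereas you reprove it by degenerating a smooth surface to $X$ along a generic disc and doing the standard Milnor-ball bookkeeping ($X\cap B_p$ a cone, $X_t\cap B_p$ a wedge of $\mu_p$ two-spheres); this is a legitimate self-contained substitute, though your appeal to Lemma~\ref{Varchenko} for triviality over the punctured disc could be replaced by the simpler remark that the complement of the discriminant hypersurface in the space of degree-$d$ forms is connected, so all smooth degree-$d$ surfaces are diffeomorphic and Ehresmann's theorem handles the punctured disc. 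Second, for the count of singular points the paper goes through Liu's inequality $\mu_p\le 3\tau_p$ \cite{Liu} together with B\'ezout applied to the Tjurina ideal, obtaining $\sum_p\mu_p\le 3(d-1)^3$ and hence the stated bound; you instead combine $\mu_p\ge 1$ with $b_2(X)\ge 1$ to get the count bounded by $d^3-4d^2+6d-3\le 3(d-1)^3$ (your factorization $d(2d-3)(d-1)\ge 0$ checks out for integer $d\ge 1$), and your fallback B\'ezout argument on three generic combinations of the partials even yields the sharper bound $(d-1)^3$. What the paper's route buys is local information---a bound on $\sum_p\tau_p$ and hence on each individual $\mu_p$, $\tau_p$, of the kind reused later in the paper (e.g.\ in the Laufer-formula argument of Section 6)---while your route is topologically more self-contained and gives a better count of the singular points, but yields no bound on the local invariants themselves.
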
 
 
 \begin{proof} Let $X_{0}$ be a smooth hypersurface of degree $d$. Then $e(X_{0})$ is completely determined by $d$, namely $e(X_0)=d^3-4d^{2}+6d$. Now $e(X)=e(X_{0}) - \sum_{p\in P}{\mu_{p}}$ where $p\in{P}$ runs through all singularities and $\mu_{p}$ is the Milnor number of the singular point $p$, \cite{Dimca1986}. It follows that $e(X)\leq d^3-4d^{2}+6d$. 
 
 Since $b_3(X)=0$ by Lemma ~\ref{b3is0}, $e(X)=1+b_{2}(X)+1\leq e(X_{0})$, it also follows that $b_{2}(X)$ is bounded by a function of $d$: $$b_{2}(X)\leq d^3-4d^{2}+6d-2$$
 
 By a theorem of Liu (\cite{Liu}), $\mu_{p}\leq 3\tau_{p}$ where $\tau$ is the Tjurina number. Since $X$ is a surface of degree $d$ its partials w.r.t. local affine coordinates have degrees $\leq d-1$. So we get by Bezout's theorem that $\sum_{p\in{P}} \tau_{p}\leq (d-1)^{3}$. This shows that the number of singular points is bounded by a cubic polynomial in $d$, namely $3(d-1)^3$.
 
 \end{proof}

 \begin{lemma} $H^{3}(X;\IZ)$ is finite. 
 \end{lemma}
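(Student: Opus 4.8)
The plan is to deduce the finiteness of $H^3(X;\IZ)$ directly from the structural identification already established, combined with the elementary fact that a projective variety has finitely generated integral homology. By Lemma~\ref{b3is0} we already know that $b_3(X)=0$ and, more precisely, that $H^3(X;\IZ)$ is isomorphic to the torsion subgroup of $H_2(X;\IZ)$. Thus the entire statement reduces to showing that this torsion subgroup is finite.

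For this I would invoke the standard fact that $X$, being a complex projective (hence compact) algebraic variety, admits a finite triangulation and therefore has the homotopy type of a finite CW-complex. Consequently each singular homology group $H_i(X;\IZ)$ is a finitely generated abelian group; in particular $H_2(X;\IZ)$ is finitely generated, so its torsion subgroup is a finite group. Feeding this back into the isomorphism of Lemma~\ref{b3is0} immediately gives that $H^3(X;\IZ)$ is finite.

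I do not expect any genuine obstacle here. The only nontrivial input, namely the vanishing $b_3(X)=0$ (which, through the Universal Coefficient Theorem, rules out a free contribution $\mathrm{Hom}(H_3(X;\IZ),\IZ)$ to $H^3(X;\IZ)$), has already been secured in Lemma~\ref{b3is0} out of the hypothesis $b_1(S')=0$. The remaining step is purely the observation that the torsion of a finitely generated abelian group is finite, so the content of the lemma lies entirely in the previously proved identification rather than in a new argument.
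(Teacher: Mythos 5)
Your proposal is correct and follows essentially the same route as the paper: the paper likewise deduces finiteness from the identification $H^{3}(X;\IZ)\cong \mathrm{Torsion}\, H_{2}(X;\IZ)$ of Lemma~\ref{b3is0} together with the finite generation of the integral homology of the complete variety $X$. Your write-up merely spells out the triangulability argument that the paper leaves implicit.
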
 
 
 \begin{proof} This follows from the above lemma since the integral homology of $X$ is finitely generated since $X$ is complete.
 
 \end{proof} 
 
 \begin{remark}
 It can be shown that the integral homology of any algebraic variety is finitely generated.
 \end{remark}
 
 We show next that irreducible components of $D'$ are independent in homology of $X'$ where $X'$ is a resolution of singularities of $X$ and $D'$ is the total inverse image of $D=X-S$. We first prove:

  \begin{lemma}\label{bounded determinant} The intersection matrix $(D_{i}'\cdot D_{j}')$ is non-singular.
    \end{lemma}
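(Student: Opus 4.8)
The plan is to read the non-singularity of $(D_i'\cdot D_j')$ as the conjunction of two facts: that the classes $[D_i']$ are linearly independent in $H_2(X';\IQ)$, and that the intersection form is non-degenerate on the subspace $W$ they span. The geometric input driving both is that $D$ is a hyperplane section of $X$. If $\pi:X'\to X$ is the resolution and $\psi:X'\to\IP^3$ the composite with the inclusion $X\subset\IP^3$, then $D=H_\infty\cap X$ is a Cartier divisor and its reduced total transform is exactly $\mathrm{Supp}(\pi^*D)$, so $\pi^*D=\psi^*\cO_{\IP^3}(1)=\sum_i m_i D_i'$ with every $m_i\geq 1$. In particular $\pi^*D$ lies in $W$, and by the projection formula $(\pi^*D)^2=D^2=d>0$, the self-intersection of a hyperplane section on the degree-$d$ surface $X$.

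First I would prove independence, which is where the hypothesis $b_1(S')=0$ enters. Since $S'=X'-D'$, the long exact cohomology sequence of the pair $(X',D')$ reads
$$H^2(X')\xrightarrow{\ \phi\ }H^2(D')\to H^3(X',D')\to\cdots,$$
and by Lefschetz duality $H^3(X',D')\cong H_1(S')$, which is finite because $b_1(S')=0$. Hence $\mathrm{coker}\,\phi$ is finite and $\phi$ is surjective after tensoring with $\IQ$. Identifying $H^2(D';\IQ)\cong\IQ^{r}$ by taking degrees on the components, $\phi$ becomes $\alpha\mapsto(\langle\alpha,[D_j']\rangle)_j$; its surjectivity is equivalent, via the non-degenerate Poincar\'e pairing on $H^2(X';\IQ)$, to the linear independence of the classes $[D_j']$ in $H_2(X';\IQ)$. (This is the same cohomological mechanism used in the earlier lemma relating $b_1(Z)$ and $b_1(Z-\cup D_i)$, run in the opposite direction.)

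With independence in hand I would finish using the Hodge index theorem. Suppose $\det(D_i'\cdot D_j')=0$; then there is a nonzero rational vector $(a_i)$ with $\sum_i a_i(D_i'\cdot D_j')=0$ for all $j$. Put $\gamma=\sum_i a_i[D_i']$. By independence $\gamma\neq 0$ in $H_2(X';\IQ)$, hence $\gamma\neq 0$ in $\mathrm{Num}(X')_\IR$. On the other hand $\gamma\cdot D_j'=0$ for all $j$ gives at once $\gamma^2=\sum_i a_i(\gamma\cdot D_i')=0$ and $\gamma\cdot\pi^*D=\sum_j m_j(\gamma\cdot D_j')=0$. Since $(\pi^*D)^2=d>0$, the Hodge index theorem says the orthogonal complement of $\pi^*D$ in $\mathrm{Num}(X')_\IR$ is negative definite, so the nonzero class $\gamma\perp\pi^*D$ must satisfy $\gamma^2<0$, contradicting $\gamma^2=0$. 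Therefore the intersection matrix is non-singular, and in particular the $[D_i']$ are independent, as needed for what follows.

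The routine part is the positivity input: once one knows $\pi^*D$ is a combination of the $D_i'$ of positive square, Hodge index closes the argument in one line. The substantive step is the independence of the $[D_i']$, and this is exactly where $b_1(S')=0$ is indispensable; without it a reducible boundary can have homologically dependent components, so that the matrix acquires a kernel coming from a genuine numerical relation and the statement fails. I should also record carefully that non-singularity needs \emph{both} ingredients: Hodge index alone only yields non-degeneracy of the form on the span $W$, which says nothing about the Gram matrix of a possibly redundant generating set.
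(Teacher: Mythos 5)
Your proof is correct, but it diverges from the paper's at both of its key points, so a comparison is in order. The paper also reduces to the Hodge index theorem, but it obtains its positive class from Goodman's criterion: since $S'$ is affine, $D'$ supports a nef and big divisor on $X'$. You instead exhibit the class explicitly as $\pi^*$ of the hyperplane section at infinity, with support exactly $D'$ and all multiplicities $\geq 1$; this is more concrete and avoids the citation, though the paper's version has the advantage of working verbatim for any affine open complement, not just the complement of a hyperplane section. The genuine divergence is in how the degenerate case is excluded. With $D_0'$ orthogonal to all $D_i'$ and $D_0'^2=0$, Hodge index leaves open that $D_0'$ is numerically trivial; the paper kills this possibility by observing that a trivial class supported on the boundary yields a nonconstant regular unit on $S'$, whose $n$-th roots give abelian unramified covers of arbitrarily large degree, contradicting $b_1(S')=0$. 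You instead establish \emph{beforehand} that the $[D_i']$ are independent in $H_2(X';\IQ)$, via the long exact sequence of the pair $(X',D')$, the duality $H^3(X',D')\cong H_1(S')$, and finiteness of $H_1(S')$ --- the same mechanism as the paper's Lemmas 2.2 and 2.3, run in the opposite direction. This buys you two things: the paper's later Lemma 4.8 (homological independence of the components of $D'$) falls out as an intermediate step rather than needing a separate ``similar'' proof, and you bypass the paper's slightly glossed inference ``numerically trivial $\Rightarrow$ rationally equivalent to $0$'' (strictly one must pass to a multiple, using $b_1(X')=0$ to control $\mathrm{Pic}^0$ and torsion in the N\'eron--Severi group). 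The one step you assert without justification is ``nonzero in $H_2(X';\IQ)$, hence nonzero in $\mathrm{Num}(X')_{\IR}$''; this is the standard fact that a numerically trivial divisor class has zero image in $H^2(X';\IQ)$, itself a signature consequence of Hodge index, so it is harmless but deserves a sentence.
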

  
  \begin{proof} If this is not true then there is an integral divisor $D_0'=\Sigma a_iD_i'$ such that $D_0'\cdot D_i'=0$ for all $i$. Since $S$ is affine $D'$ supports a nef and big divisor
  for $X'$ by Goodman's criteria \cite{Goodman}. Hence by Hodge Index Theorem either $D_0'^2<0$, or $D_0'$ is rationally equivalent to $0$. Since $D_0'^2=0$ it follows that $D_0'$ is rationally equivalent to $0$. This gives a non-constant regular invertible function on $S'$. But $b_1(S')=0$ so such a function cannot exist since adjoining $n^{th}$- root of the non-constant unit gives abelian unramified covers of arbitrarily large degree. 

  This contradiction shows that the determinant of $(D_{i}'\cdot D_{j}')$ is non-zero.   
  \end{proof}
  
   \newpage 
   \nind
   The following lemma is slightly more general than the previous results.
   
 \begin{lemma} The intersection matrices of exceptional curves of a resolution of singularities of $X$ are bounded by $d$ for a normal projective surface $X$ of degree d in $\IP^n$. 
 
 \end{lemma}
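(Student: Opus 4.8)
The plan is to reduce to a projective space of dimension independent of $n$ and then invoke the equisingularity theorem of Lemma~\ref{Varchenko}. First I would take a general linear projection of $X$ into $\IP^5$: choosing a center $L\subset\IP^n$ of dimension $n-6$ disjoint from the secant variety $\mathrm{Sec}(X)$ (which has dimension at most $5$, so a general such $L$ works, since $\dim L+\dim\mathrm{Sec}(X)\le(n-6)+5<n$), the projection restricts to a closed embedding of $X$ onto a normal surface $\overline X\subset\IP^5$ of the same degree $d$. Thus every isolated normal surface singularity occurring on a degree-$d$ surface in some $\IP^n$ also occurs, with the same analytic type, on a degree-$d$ surface in the \emph{fixed} space $\IP^5$; consequently any finiteness statement proved in $\IP^5$ will depend on $d$ alone.

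Next I would apply Varchenko's theorem to the universal family of degree-$d$ surfaces in $\IP^5$. Degree-$d$ surfaces in $\IP^5$ are cut out by equations of degree bounded in terms of $d$, whose coefficients range over a finite-dimensional parameter space; covering that space by finitely many affine charts $\IC^l$ and taking the defining forms as the $T_i^j(z,t)$ of Lemma~\ref{Varchenko} realizes the universal surface as a distinguished subset of $\IP^5\times\IC^l$. Lemma~\ref{Varchenko} then produces a proper algebraic subset $A\subset\IC^l$ outside which the family is topologically locally trivial; iterating the construction on $A$ (Noetherian induction) yields a finite stratification of the whole parameter space into locally closed strata over each of which the homeomorphism type of the pair $(\IP^5,\overline X_t)$ is constant.

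Finally I would convert topological constancy into constancy of resolution data. On each stratum the trivializing homeomorphism carries singular points to singular points, so both the number of singular points and the oriented homeomorphism type of the link of each singularity are constant along the stratum. The weighted dual graph of the minimal (good) resolution --- in particular the size and the entries of the negative-definite intersection matrix of the exceptional curves --- is determined by this link up to finitely many possibilities. Since for each fixed $d$ there are only finitely many strata, only finitely many such block-diagonal intersection matrices arise, whence their sizes and entries are bounded by $d$. As each singular point of $X$ corresponds, with preserved singularity type, to such a point of $\overline X$, the lemma follows.

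The step I expect to be most delicate is the first one: checking that a general projection into a fixed projective space really preserves the analytic type of each singularity (the secant-variety dimension count, and, if one instead projects to $\IP^3$, the verification that the projection is still a local analytic isomorphism at each point of $\Sing X$), together with the input that the weighted dual graph of the minimal resolution is a topological invariant of the link. I also note that Lemma~\ref{Varchenko} yields only the finiteness of the number of strata and not a count, so this argument is existential and produces no explicit function of $d$ --- exactly the limitation anticipated in the introduction.
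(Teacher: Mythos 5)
Your first step fails, and the rest of the argument rests on it. A closed embedding preserves Zariski tangent spaces, so a surface embedded in $\IP^5$ has embedding dimension at most $5$ at every point; but a normal surface singularity on a degree-$d$ surface can have embedding dimension as large as $d+1$ (this is exactly the content of the paper's Lemma 6.1, and it is sharp: the cone over a rational normal curve of degree $e$ is a normal surface of degree $e$ in $\IP^{e+1}$ with embedding dimension $e+1$ at the vertex). At such a point no linear projection to $\IP^5$ can be a local analytic -- or even local scheme-theoretic -- isomorphism. The dimension count on the secant variety (more precisely the join $S(X,X)$, of dimension at most $5$) only guarantees that a general center avoids all honest secant lines and the tangent \emph{star}; it cannot make the center avoid the full embedded Zariski tangent space at a singular point, which may have dimension $\geq 6$ and is then met by \emph{every} linear center of dimension $n-6$. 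So the claim that every degree-$d$ surface singularity occurs ``with the same analytic type'' on a degree-$d$ surface in the fixed space $\IP^5$ is false, and the transfer to a fixed parameter space collapses. The gap is repairable without any projection: an irreducible non-degenerate surface of degree $d$ in $\IP^n$ satisfies $d\geq n-1$, so after replacing $\IP^n$ by the linear span of $X$ you may assume $n\leq d+1$, and the ambient space is fixed by $d$ alone; your Varchenko stratification in $\IP^{d+1}$ then goes through (granting, as you do, that a homeomorphism of pairs preserves the singular points of a normal surface -- which follows from Mumford's theorem, since manifold points of a normal surface are exactly the smooth points -- and Neumann's theorem that the link determines the minimal good resolution graph up to finitely many possibilities).

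Even with this repair, note that your route is genuinely different from the paper's and strictly weaker in output for this particular lemma. The paper argues constructively: it takes a Noether normalization $X\to\IP^2$ of degree $d$, bounds the degree of the branch curve by the discriminant of a generic projection, resolves the branch curve to an snc divisor with a number of blow-ups bounded by $d$, and shows that the normalized fiber product has only cyclic quotient singularities of local fundamental group order at most $d^2$; standard intersection theory (including a Cramer's-rule computation of total transforms) then bounds the sizes and entries of the exceptional intersection matrices by explicit functions of $d$. Your stratification argument is purely existential -- it produces no function of $d$ -- which is precisely the defect the introduction laments for the $H_1$ bound, where the paper does resort to stratification because nothing better is available; for the present lemma the quantitative proof exists and is what the paper uses (and what later lemmas, e.g.\ the Cramer's-rule bound on the coefficients of $K_{X'}$ and the bound on $H_1^{\infty}(S)$ via $|\det(D_i'\cdot D_j')|$, actually consume).
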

 
 \begin{proof} We first analyse the situation for an affine normal surface $V$ of degree $d$ in $\IA^n$ and use it for $X$. Let $x,y,z,...$ be affine coordinates in $\IA^n$. We can assume that the morphism given by $(x,y),(x,y,z)$ from $V$ to $\IA^2,\IA^3$ resp. are finite and the second morphism is birational to its image in $\IA^3$. This uses just Noether normalization and primitive element theorem. The image of $V$ in $\IA^3$, say $V_0$, can be assumed to be defined by a degree $d$ polynomial $f=z^d+a_1(x,y)z^{d-1}+...+a_d(x,y)=0$ (since a generic linear projection does not change degrees). Clearly, the branch locus $B\subset\IA^2$ of $V \to \IA^2$ is contained in the branch locus of $V_0\to \IA^2$. The latter is given by considering the resultant $R$ of $f$ and its partial w.r.t. $z$. Since degree $a_i$ is at most $d$ for all $i$ the degree of $R$ is bounded by d. It follows that all the irreducible curves in $B$ have degrees, intersection multiplicities, types of singularities, genera, etc. bounded by functions of $d$.\\
 
 \nind
 By abuse of notation we will now denote the branch locus for the Noether normalization of degree $d$, $X\to\IP^2$, by $B$. Hence we can find $m$ blow-ups of $\IC^{2}$ to make the total transform $\tilde B$ of $B$ snc, where $m$ is bounded by a function of $d$. Let $Y$ be the blow-up of $\IC^{2}$ obtained. We take the normalization of the fibre product $V'=\overline{V\times_{\IC^{2}} Y}$. This is a $d$-fold cover of $Y$ with ramification divisor $R'$ in $V'$ (note that not every irreducible curve in $\tilde B$ may be in the branch locus). 
    
Let $B_1,B_2$ be two irreducible curves in $\tilde{B}$ which meet at a point $p$. In a small neighbourhood $U$ about $p$ in $Y$ with coordinates $z_1,z_2$, on $U_0=U-B_1\cup{B_2}$ we get an at most $d$-fold \'etale cover. Since the fundamental group of $U_0$ is $\IZ\oplus\IZ$, the inverse image of $U_0$ in $V'$ has fundamental group an index at most $d$ subgroup $K$ of $\IZ\oplus\IZ$, say of index $t\leq{d}$. Then $K$ has a subgroup $K'$ of the form $t\IZ\oplus{t\IZ}$. But $K'$ corresponds to the cover $(u,v)=(\sqrt[t]{z_1},\sqrt[t]{z_2})$ of $Y$, which is smooth and of degree $t^2\leq{d^2}$. This implies that the covering $V'$ of $Y$ corresponding to $K$ is an abelian quotient singular point, say a quotient of $\IC^2$ by an abelian group $G$. The inverse image of $(B_1\cup B_2)\cap{U}$ in this smooth cover is the union of axes $uv=0$.
    
    Since the map from this cover to $V'$ is Galois, locally analytically, the inverse image of an irreducible branch curve in $V'$ is a quotient of the inverse image of the branch curve in the above $t^2$-fold cover. Hence every irreducible curve in $R'$ is smooth. In other words, for any irreducible component $E_i$ of $\tilde B$ its inverse image in $V'$ is a disjoint union of smooth irreducible curves. $V'$ is a partial resolution of $V$. We observe that the intersection matrix of the weighted dual graph of the exceptional divisor in $Y$ is bounded by $d$.

    \textit{Claim}: $V'$ has at most cyclic quotient singularities with local $\pi_{1}$ of order at most $d^{2}$. 
    
    \textit{Proof of Claim}: This is well-known. Singular points of $V'$ lie over singular points of $\tilde B$ (analytically near a smooth point of $\tilde{B}$, it is a coordinate $z_1=0$ in a polydisc $\Delta$ in $\IC^2$ and since $\pi_1(\Delta-\tilde{B})\cong\IZ$, locally the ramified cover is cyclic, hence the inverse image of $\tilde{B}\cap\Delta$ is smooth; so singularities of $V'$ can only be over singularities of $\tilde{B}$). We saw above that the singularities of $V'$ are abelian quotient singular points, say a quotient of $\IC^2$ by a group $G$ which can be taken to be a "small" subgroup of $GL_2(\IC)$ since for the normal subgroup $H$ of $G$, generated by pseudo-reflections, $\IC^2/H$ is smooth by the Chevalley-Shepherd-Todd theorem ($G$ is "small" if it contains no non-trivial pseudo-reflections). This group $G$, being abelian, can be assumed to be a diagonal subgroup of $GL_2(\IC)$. 
    
    In fact $G$ must be cyclic. If not, then by the structure theorem, $G$ is a direct sum of cyclics $G_1, G_2$ generated by $g_1, g_2$ respectively (only two cyclics suffice since a quotient of $\IZ\oplus\IZ$ is generated by at most two elements). If the orders of $G_i =m_i$ are coprime then $G$ is cyclic, so the orders have a prime $p$ as a common factor. 
    Let $g_i$ be the diagonal matrix with diagonal entries $a_i, b_i$. Consider $g_i^{m_i/p}$. This has order $p$. So both $g_i^{m_i/p}$ have first diagonal entry a primitive $p^{th}$ root of unity. By adjusting the power of $g_2$ we can assume that for suitable powers, $g_1^l$ and $g_2^r$ have first entries which are inverses of each other, so that $g_1^l.g_2^r$ has first entry $1$. The second entry of the product cannot be $1$, since $l$ and $r$ are strictly less than $m_1, m_2$ respectively and $G$ is a direct sum of $G_i$. Hence $g_1^l.g_2^r$ is a pseudo-reflection in $G$. So $G$ is not small.
 
 We deduce that the singular point $p'$ of  $V'$ is cyclic of order at most $d^2$.

\textit{Claim}: Let $E_i$ be an exceptional irreducible curve for $Y\to\IP^2$ and $E_i'$ an irreducible curve in $V'$ lying over $E_i$. Then $E_i'^2\leq d\cdot E_i^2$.

\textit{Proof of Claim}: Note that since $V'$ is singular some of the $E_i'^2$ can be rational. The inverse image of $E_i$ is a disjoint union of smooth irreducible curves. If $h: V'\to Y$ is the morphism then $(h^{*}E_i)^2=d\cdot E_i^2$. We can write $h^*E_i=\Sigma e_{ij}E_{ij}$ for some integers $e_{ij}$. The inverse image of a small open neighborhood of $E_i$ is
a disjoint union of open neighborhoods of $E_{ij}$. Since $E_i^2<0$ it follows that $E_{ij}^2<0$. Since $dE_i^2=\Sigma e_{ij}^2E_{ij}^2$ the result follows.

If we consider the minimal resolution $V''$ of $p'$ then the exceptional divisor is a linear chain of smooth rational curves which is snc and each irreducible component has self-intersection at most $d^2$, and there are at most $d^2$ irreducible exceptional curves. Considering the proper transforms of the exceptional curves for $V'$ to $V$ in $V''$, we deduce using the standard intersection theory that the total exceptional divisor for $V''$ to $V$ has a bounded intersection matrix.\\
The details are as follows. Let $A_1$ be an irreducible curve in $V'$ passing through a singular point $p'$ of $V'$. In $V''$ the proper transform $A_1'$ of $A_1$ meets an end irreducible component of the exceptional divisor over $p'$ transversally in one point, say $E_1,E_2,..,E_s$, and $A_1'$ meets $E_1$. Let $A_1'+m_1E_1+...+m_sE_s$ be the total transform of $A_1$ in $V''$. The total transform has $0$ intersection with each $E_i$. We know that $s$ and $E_i\cdot E_j$ are bounded by $d$. Solving for $m_i$ by Cramer's rule we see that $m_i$ are bounded by $d$. From these observations we deduce that $A_1'^2$ is also bounded by $d$.\\ 

 \end{proof}

 \begin{lemma} Let $X$ be a normal projective surface of degree $d$ in $\IP^n$. Let $D$ be the curve $X\cap \{h_{0}=0\}$ where $h_{0}=0$ is the hyperplane at infinity. Then there is a suitable resolution of singularities $X'$ of $X$ such that the union of proper transform of $D$ and the exceptional divisor is snc and the matrix of their intersections is bounded by $d$. 
 \end{lemma}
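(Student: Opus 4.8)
The plan is to repeat the branch-locus covering construction of the previous lemma, but to resolve the image of $D$ together with the branch curve \emph{before} taking the cover, and then to read off every intersection number from the covering. First I would take the generic Noether normalization $\pi:X\to\IP^2$ of degree $d$ with branch curve $B$ of degree bounded by $d$, and set $\overline D:=\pi(D)$; since $D$ is a hyperplane section, $\pi|_D$ is finite and $\overline D$ is a plane curve of degree at most $d$. By embedded resolution of plane curves I would blow up $\IP^2$ a bounded number of times to obtain $\rho:Y\to\IP^2$ for which the total transform $\tilde B$ of $B$, the proper transform $\widetilde{\overline D}$ of $\overline D$, and the $\rho$-exceptional locus together form an snc configuration with all intersection numbers bounded by $d$ --- this is possible because $B$ and $\overline D$ have bounded degree, hence bounded singularities. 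I would then form the normalized degree-$d$ cover $V'=\overline{X\times_{\IP^2}Y}$, with map $h:V'\to Y$, exactly as before.

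Next I would track the proper transform $\tilde D$ of $D$, which maps onto $\widetilde{\overline D}$ under $h$ with degree $\deg(\pi|_D)\le d$. By the previous lemma the singular points of $V'$ are cyclic quotient singularities of order at most $d^2$ lying over the double points of $\tilde B$; since $\tilde B\cup\widetilde{\overline D}$ is snc, $\widetilde{\overline D}$ meets $\tilde B$ only transversally at smooth points of $\tilde B$, so over every such point $V'$ is smooth and the preimage of $\widetilde{\overline D}$ is smooth. Hence $\tilde D$ is smooth and avoids the singular points of $V'$ (if $\overline D$ happens to be a component of $B$, then $\tilde D$ lies in the ramification divisor, which is smooth by the previous lemma, and the same conclusions hold). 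Resolving the quotient singularities to obtain $X'$ therefore leaves $\tilde D$ untouched and contributes only chains of smooth rational curves with bounded intersection matrices; together with the smooth preimages of the $\rho$-exceptional curves, these give an exceptional divisor of $X'\to X$ whose components are all smooth and whose intersection matrix is bounded by $d$, by the previous lemma.

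To bound the remaining numbers I would use the projection formula $D_k\cdot h^*C=\deg(h|_{D_k})\,(\widetilde{\overline D}\cdot C)$ for the components $D_k$ of $h^{-1}(\widetilde{\overline D})$. Taking $C=\widetilde{\overline D}$ yields $\tilde D^2=\deg(h|_{\tilde D})\cdot\widetilde{\overline D}^2$ up to correction terms coming from the transversal, hence bounded, ramification of $\widetilde{\overline D}$ along $\tilde B$; since $\widetilde{\overline D}^2$ is bounded (because $\deg\overline D\le d$ and $\rho$ consists of boundedly many blow-ups) and $\deg(h|_{\tilde D})\le d$, the self-intersection $\tilde D^2$ is bounded. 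Taking instead $C$ to be a $\rho$-exceptional curve bounds $\tilde D\cdot E_j$ for each exceptional component $E_j$. Finally, since $\tilde D$ is smooth and meets the exceptional divisor $E$ in boundedly many points with bounded local multiplicities, a further bounded number of blow-ups removes any tangencies or triple points and makes $\tilde D+E$ snc, each such blow-up altering the already bounded intersection matrix by a bounded amount.

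The hard part is the second step: keeping $D$ under control through the cover and the resolution of the quotient singularities --- verifying that $\tilde D$ remains smooth, misses the quotient singular points, and crosses the exceptional curves transversally --- since it is precisely here that either an unbounded number of blow-ups or unbounded intersection numbers could otherwise be forced.
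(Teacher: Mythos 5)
Your proposal is correct and follows essentially the same route as the paper's sketch: Noether normalization to $\IP^{2}$, embedded resolution of a bounded-degree plane configuration, normalized fiber product with cyclic quotient singularities bounded by $d$, then a bounded further resolution. The only divergence is that the paper chooses the projection with $h_{0}$ itself as a coordinate, so that $D$ is exactly the pull-back $\pi^{*}L_{0}$ of a line $L_{0}$ added to the configuration $B\cup L_{0}$ before resolving; this spares the projection-formula corrections, the residual components of $h^{-1}(\widetilde{\overline{D}})$, and the caveat about $\pi(D)$ lying in $B$ that your generic-projection variant must handle.
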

 
 \begin{proof} The proof is similar to the proof of last lemma so we give only a sketch. 
 
 Choose general hyperplanes $\{h_{1},h_{2}\}$ in $\IP^{n}$ such that the morphism given by $p\to [h_{0}(p):h_{1}(p):h_{2}(p)]$ from $X\rightarrow \IP^{2}$ is finite of degree $d$. This is just Noether normalization. Then $h_{0}=0$ on X is the pull-back of a line $L_0$ in $\IP^2$.  Let $B$ be the branch curve in $\IP^2$. Then degree $B$ is bounded by $d$ since $B$ is defined by the discriminant of the defining equation of a general projection of $X$ in $\IP^n$. We can find a suitable sequence of blow ups of $\IP^2$, whose number is bounded by a function of $d$, such that the total transform of $B\cup L_0$, is snc.
Let $Y$ to $\IP^2$ be this final surface. Then as seen in the last lemma, the normalized fiber product $Z:= X\times_{\IP^2}Y$ has at worst cyclic quotient singularities which are bounded by $d$, the finitely many curves of interest in $Z$ have snc. Now $Z$ is a partial resolution of $X$.
Further snc resolution of $Z$, say $X'$, will yield exceptional curves whose intersection matrix is bounded by $d$. We finally have that the matrix of intersections of curves in $X'-S$ is bounded by $d$. 

\end{proof}
 
 \begin{corollary} The determinant of the intersection matrix $(D_{i}',D_{j}')$ is bounded by a function of $d$ where $D_{i}'$ are all the curves in $X'-S'$ with $X'$ a resolution of singularity of $X$.\\ 
  \end{corollary}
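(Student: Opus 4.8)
The plan is to observe that this corollary is an essentially immediate consequence of the preceding lemma together with an elementary bound on determinants. First I would recall the precise content of that lemma: for a suitable resolution of singularities $X'\to X$, the collection of curves comprising $X'-S'$ (namely the proper transform of $D=X\cap\{h_0=0\}$ together with the exceptional curves lying over the singular points of $X$ at infinity) is snc, and its intersection matrix is bounded by $d$. In the terminology of this paper this asserts two things simultaneously: the number $r$ of such curves $D_i'$ --- equivalently the size of the matrix $(D_i'\cdot D_j')$ --- is bounded by a function of $d$, and each entry $\abs{D_i'\cdot D_j'}$ is bounded by a function of $d$.

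Given these two bounds, I would invoke a purely elementary fact about integer matrices. For any $r\times r$ integer matrix $A=(a_{ij})$ with $\abs{a_{ij}}\le M$ for all $i,j$, the Leibniz expansion over permutations gives $\abs{\det A}\le r!\,M^r$, and Hadamard's inequality gives the sharper estimate $\abs{\det A}\le r^{r/2}M^r$ (bounding each row's Euclidean length by $\sqrt{r}\,M$). Applying this with $A=(D_i'\cdot D_j')$, where by the preceding lemma both $r$ and $M$ are bounded by functions of $d$, yields at once that $\abs{\det(D_i'\cdot D_j')}$ is bounded by a function of $d$.

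Finally, since the intersection numbers $D_i'\cdot D_j'$ are integers, the determinant is a \emph{nonzero} integer --- nonzero by Lemma~\ref{bounded determinant}, which establishes non-singularity of the matrix --- and we have just bounded it from above in absolute value. This completes the argument.

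I want to emphasize that there is no genuine obstacle at this stage. The substantive geometry --- controlling both the number of boundary curves and the magnitude of their mutual intersections --- was already carried out in the preceding lemma, via the Noether normalization $X\to\IP^2$, its branch curve $B$ of bounded degree, and the resulting cyclic quotient singularities of bounded local fundamental group on the normalized fibre product. The corollary merely repackages that output through the trivial remark that an integer matrix of bounded size with bounded entries has bounded determinant; the only thing one must be slightly careful to note is that ``bounded by $d$'' for the matrix in the previous lemma supplies \emph{both} the size bound and the entry bound, which are exactly the two inputs Hadamard's inequality requires.
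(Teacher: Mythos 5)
Your proposal is correct and matches the paper's intent: the paper states this corollary without proof, treating it as an immediate consequence of the preceding lemma, which bounds both the number of curves in $X'-S'$ and the entries of their intersection matrix by functions of $d$. Your explicit appeal to the Leibniz/Hadamard bound $\abs{\det A}\le r!\,M^r$ (or $r^{r/2}M^r$) for an integer matrix of bounded size and bounded entries is exactly the elementary step the authors left implicit, and your observation that non-vanishing comes from Lemma~\ref{bounded determinant} is consistent with the paper.
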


 \begin{lemma} Irreducible components of $D'$ are independent in homology. 
 \end{lemma}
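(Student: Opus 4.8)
The plan is to deduce independence directly from the non-degeneracy of the intersection form on the span of the boundary classes, which is exactly the content of Lemma~\ref{bounded determinant}. Since linear independence of the integral classes $[D_i']$ in $H_2(X';\IZ)$ is equivalent to their linear independence in $H_2(X';\IQ)$, it suffices to rule out any non-trivial rational relation, and for this I would argue by contradiction.

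Suppose that $\sum_i a_i[D_i']=0$ in $H_2(X';\IQ)$ with the $a_i$ not all zero; clearing denominators one may take $a_i\in\IZ$. The intersection pairing on the smooth projective surface $X'$ is a well-defined bilinear form on $H_2(X';\IQ)$, and homologous cycles pair identically with every class. Hence, intersecting the relation with each $[D_j']$ yields $\sum_i a_i\,(D_i'\cdot D_j')=0$ for every $j$. This says precisely that the non-zero vector $(a_i)$ lies in the kernel of the intersection matrix $M=(D_i'\cdot D_j')$. But Lemma~\ref{bounded determinant} asserts that $M$ is non-singular, so its kernel is trivial, contradicting $(a_i)\neq 0$. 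Therefore no non-trivial relation exists and the $[D_i']$ are independent in homology.

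I expect there to be no genuine obstacle remaining at this stage: the substantive work has already been carried out in Lemma~\ref{bounded determinant}, where the hypothesis $b_1(S')=0$ together with Goodman's criterion (that $D'$ supports a nef and big divisor on $X'$) and the Hodge Index Theorem forces the boundary intersection matrix to be non-degenerate. The present statement is then a purely formal consequence of the non-degeneracy of a symmetric bilinear form. The only points requiring any care are the routine passage between $\IZ$- and $\IQ$-independence and the observation that the intersection product is invariant under homology, both of which are standard for divisors on a smooth projective surface.
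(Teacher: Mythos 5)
Your proof is correct, and it is a streamlining of the paper's argument rather than a genuinely new one in substance. The paper's own proof of this lemma consists of the single remark that it is ``exactly similar to the proof of Lemma~\ref{bounded determinant}'': that is, given a relation $\sum_i a_i[D_i']=0$ in homology, the authors rerun the same contradiction argument (Goodman's criterion to get a nef and big divisor supported on $D'$, the Hodge Index Theorem, and the non-existence of a non-constant invertible regular function on $S'$ when $b_1(S')=0$) for the divisor $D_0'=\sum_i a_iD_i'$. You instead take Lemma~\ref{bounded determinant} as a black box and observe that a homological relation already places the coefficient vector in the kernel of the intersection matrix $(D_i'\cdot D_j')$, because the algebraic intersection number of divisors on the smooth projective surface $X'$ equals the topological pairing of their fundamental classes in $H_2(X';\IZ)$; non-singularity of the matrix then kills the relation. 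This formal reduction buys a cleaner logical structure---no geometry has to be repeated---and the two points of care you flag (the passage between $\IZ$- and $\IQ$-independence in a finitely generated abelian group, and the fact that the intersection product descends to homology) are exactly the right ones and are handled correctly. Note that you use the valid direction of the linear algebra: a non-singular Gram matrix implies independence of the classes, while the converse can fail for an indefinite form, but you never need it.
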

 
 \begin{proof} The proof is exactly similar to the proof of Lemma 4.5.\\
 \end{proof}

Let $S,S',X,X'$ be as above, where we only assume that $H_1(S';\IZ)$ is finite and $S$ is embedded as a closed subvariety in $\IA^n$. Then $H_1(X';\IZ)$ is also finite.\\

\nind
We will first prove the following.\\

\begin{theorem} The second Betti number of $X'$ is bounded by $d$ for a suitable resolution of singularities $X'$ of $X$. \\
Similarly, $b_2(X)$ is bounded by $d$.
\end{theorem}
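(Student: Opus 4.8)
The plan is to reduce both assertions to bounds on topological Euler characteristics, exploiting that the hypothesis $b_1(S')=0$ kills $H^3$. First I would record the vanishing of the odd Betti numbers: the surjection $\pi_1(X')\to\pi_1(X)$ (Lemma 2.1) together with $b_1(S')=0$ gives $b_1(X')=0$, and Poincar\'e duality on the smooth projective surface $X'$ then yields $b_3(X')=0$; likewise $b_1(X)=0$ and $b_3(X)=0$ by Lemma~\ref{b3is0}. Consequently $e(X')=2+b_2(X')$ for the smooth projective surface $X'$, and $e(X)=2+b_2(X)$ for the irreducible normal projective surface $X$ (both spaces being compact with $b_0=b_4=1$). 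It therefore suffices to bound $e(X')$ and $e(X)$ by $d$.

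To bound $e(X)$ I would use a Noether normalization $f\colon X\to\IP^2$ of degree $d$, exactly as in the preceding lemmas, whose branch curve $B\subset\IP^2$ has degree bounded by $d$. I would stratify $\IP^2$ by the complement of $B$, the smooth locus of $B$, and the singular points of $B$, refined so that $f$ becomes a topological covering over each stratum $\Sigma_j$; the existence of such a locally trivial stratification is guaranteed by the equisingularity lemma of Varchenko (Lemma~\ref{Varchenko}). Over $\Sigma_j$ the map $f$ is an $m_j$-sheeted covering with $m_j\le d$, so additivity and multiplicativity of the Euler characteristic give $e(X)=\sum_j m_j\,e(\Sigma_j)$. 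Since $\deg B$ is bounded by $d$, the number of strata, their Euler characteristics, and the sheet numbers $m_j$ are all bounded by $d$; hence $e(X)$, and therefore $b_2(X)=e(X)-2$, is bounded by $d$.

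To pass from $X$ to $X'$ I would compare them across the resolution $\pi\colon X'\to X$. Writing $E=\pi^{-1}(P)$ for the exceptional divisor over the singular set $P$ and using that $X'-E\cong X-P$, additivity of $e$ yields $e(X')=e(X)-\#P+\sum_{p\in P}e(E_p)$. The preceding lemma on exceptional intersection matrices shows that each $E_p$ is an snc curve with a bounded number of (rational) components, so every $e(E_p)$ is bounded by $d$; moreover the singular points of $X$ all lie over the boundedly many singular points of $B$, with at most $d$ preimages apiece, so $\#P$ is bounded by $d$. Therefore $e(X')$ is bounded by $d$, and $b_2(X')=e(X')-2$ is bounded by $d$ as well.

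The main obstacle is the bound on $e(X)$: the delicate point is producing a stratification of $\IP^2$ over which $f$ is genuinely topologically locally trivial, so that the clean identity $e(X)=\sum_j m_j\,e(\Sigma_j)$ holds. This is precisely where the equisingularity input of Lemma~\ref{Varchenko} is needed, and it is also why this bound is most naturally phrased through the stratification rather than through explicit intersection-theoretic data. I would finally verify that the hypothesis $b_1(S')=0$ is genuinely used: it enters only through $b_1(X')=0$, which is exactly what converts the bound on $e(X')$ into a bound on $b_2(X')$. Indeed, without it one has $e(X')=2-2b_1(X')+b_2(X')$, and $b_2(X')$ need not be bounded.
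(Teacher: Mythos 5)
Your argument is correct and rests on the same two pillars as the paper's proof: the degree-$d$ Noether normalization $X\to\IP^2$ with branch curve $B$ of degree bounded by $d$, and the boundedness of the exceptional dual graphs from the preceding lemmas; the reduction $b_2=e-2$ via $b_1=b_3=0$ is also how the paper opens its proof. The organization, however, differs. The paper never stratifies $\IP^2$: it first blows up $\IP^2$ to a surface $Y$ on which the total transform $B'$ of $B$ is snc, forms the normalized fiber product $Z$, and computes $e(Z)=d\cdot e(Y-B')+e(R)$ using that $Z-R\to Y-B'$ is unramified of degree $d$; it then passes to the resolution $Z'$ (the ``suitable'' $X'$) via the bounded cyclic-quotient resolution graphs, and bounds $e(X)$ by the same covering computation over $\IP^2-B$ directly. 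You invert this order — bound $e(X)$ first, then correct by $e(X')=e(X)-\#P+\sum_{p\in P}e(E_p)$ — which is fine and arguably cleaner, since $\#P$ and each $e(E_p)$ are bounded by the earlier lemmas; what the paper's route through $Z$ buys is that the snc branch locus and the cyclic-quotient structure of the singularities of $Z$ are exactly what make the exceptional graphs (and hence your $e(E_p)$) bounded in the first place.

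Three cautions, none fatal. First, Lemma~\ref{Varchenko} as stated concerns equisingularity of a \emph{family} over a parameter space $\IC^l$ (the paper uses it only for the Chow-variety argument in Theorem 4.11); it does not directly stratify $\IP^2$ for a single finite map. What you need is elementary: $f$ is a topological covering over $\IP^2-B$, and $B$ is stratified by its smooth locus minus the finitely many points where the fiber count on $B$ jumps — equivalently, the paper's identity $e(X)=d\cdot e(\IP^2-B)+e(\tilde B)$ with $e(\tilde B)$ bounded does the same job with no stratification theorem. Second, your derivation of $b_1(X')=0$ cites the surjection $\pi_1(X')\to\pi_1(X)$, which points the wrong way; the correct application of Lemma 2.1 is to the Zariski-open embedding $S'\subset X'$, giving $\pi_1(S')\twoheadrightarrow\pi_1(X')$, exactly as in the proof of Lemma~\ref{b3is0}. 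Third, the components of $E_p$ need not all be rational: the inverse images in $Z$ of the exceptional curves of $Y\to\IP^2$ are degree-$\le d$ covers of $\IP^1$ and so only of bounded genus; the boundedness of $e(E_p)$ survives, which is all your argument uses.
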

  
 \begin{proof} Note that $b_1(X)=0$. So it suffices to show that $e(X')$ is bounded by $d$.
 
As in an  earlier proof let $X\to\IP^2$ be a Noether normalization of degree $d$. Then there is a smooth surfaces $Y$ obtained by blowing up $\IP^2$ at points such that the number of blowups is bounded by $d$ and the total inverse image of the branch locus $B$ for $X\to\IP^2$ in $Y$ is an snc curve $B'$. Let $Z$ be the normalized fiber product $Y\times_{\IP^2}X$. Then $Z$ has at most cyclic quotient singularities whose resolution dual graphs are bounded by $d$. Let $R$ be the inverse image of $B'$ in $Z$. Now $Z-R\to Y-B'$ is finite unramified of degree $d$. Hence $e(Z-R)=d\cdot e(Y-B')$. Both $e(Y),e(B')$ are bounded by $d$. Hence $e(Y-B')=e(Y)-e(B')$ is also bounded by $d$. Similarly, $e(R)$ is bounded by $d$. Thus, $e(Z)=d\cdot e(Y-B')+e(R)$ is bounded by $d$. Since the weighted dual graphs of minimal resolution of singularities $Z'$ of $Z$ are bounded by $d$ we deduce that $e(Z')$ is bounded by $d$. Since $Z'$ is a resolution of singularities of $X$ the result is proved for $X'$.\\

The proof for $X$ is similar. As seen earlier, $e(B)$ is bounded by $d$. Now $X-\tilde B\to \IP^2-B$ is finite unramified of degree $d$, where $\tilde B$ is the inverse image of $B$ in $X$. Then as in the above argument we deduce that $e(X)$ is bounded by $d$. 

\end{proof}

Now let $S,X,X',D'$ be above. Consider the relative cohomology sequence for the pair $(X',D')$.
It has the portion $H^2(X';\IZ)\to H^2(D';\IZ) \to H^3(X',D';\IZ)\to H^3(X';\IZ)\to (0)$ since $D'$ has real dimension $2$. By duality, $H^3(X';\IZ)\cong H_1(X';\IZ)$. Every irreducible component $D_i'$ of $D'$ gives an element of $H^2(X';\IZ)$. Considering the intersection of this cohomology class with all $D_j'$ we see that the cokernel of $H^2(X';\IZ)\to H^2(D';\IZ)$ has order at most the absolute value of the determinant of the matrix $(D_i'\cdot D_j')$. This latter is bounded by $d$. This implies that the order of $H_1(S';\IZ)$ is bounded by $d$ if and only if the order of $H_1(X';\IZ)$ is bounded by $d$.\\

\nind
We now come to an important result for the proof of the Main Theorem.\\

Consider the class $\mathcal F$ of smooth irreducible surfaces $Y$ of degree $d$ in $\IP^n$ such that $b_1(Y)=0$. Then $H_1(Y;\IZ)$ is finite for all such $Y$.\\

\begin{theorem} There are only finitely many orders of $H_1(Y;\IZ)$ for $Y\in{\mathcal F}$.

\end{theorem}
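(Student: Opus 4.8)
The plan is to realize every member of $\cF$ as a fibre of a single finite-type algebraic family and then invoke the equisingularity theorem (Lemma~\ref{Varchenko}) to cut that family into finitely many topologically locally trivial pieces. First I would reduce to a bounded ambient dimension. Since $H_1(Y;\IZ)$ depends only on $Y$ as an abstract variety and not on the chosen embedding, I may replace $\IP^n$ by the linear span of $Y$, in which $Y$ is nondegenerate. For an irreducible nondegenerate surface one has $d\geq \codim Y+1=(m-2)+1$, where $\IP^m$ is the span, so $m\leq d+1$. Thus it suffices to prove the assertion separately for each fixed $m$ in the finite range $2\le m\le d+1$, the total number of orders being the sum of finitely many finite counts.

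Fix such an $m$ and let $\cC$ be the Chow variety of effective $2$-cycles of degree $d$ in $\IP^m$. This is a projective variety of finite type, and it carries a universal family $\cW\subset \IP^m\times\cC$. Covering $\cC$ by finitely many affine charts $U\hookrightarrow \IC^l$ and using the Cayley--Chow form, the restriction of $\cW$ to each chart is cut out inside $\IP^m\times U$ by finitely many polynomials that are homogeneous in the coordinates of $\IP^m$ and whose coefficients depend polynomially on the chart coordinate $t\in\IC^l$. This is exactly the input required by Lemma~\ref{Varchenko}.

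Next I would apply Lemma~\ref{Varchenko} on each chart and iterate by Noetherian induction on the excluded proper subset $A$: after finitely many steps $\cC$ is partitioned into finitely many locally closed strata over each of which the family $\cW\to\cC$ is topologically locally trivial. Over a connected stratum all fibres are then homeomorphic, so $H_1(\,\cdot\,;\IZ)$ of the fibre is constant there. Since there are finitely many strata, each with finitely many connected components, only finitely many groups $H_1$ of fibres occur; in particular only finitely many orders occur among those fibres whose $H_1$ is finite. Every $Y\in\cF$ spanning this $\IP^m$ is such a fibre, with $b_1(Y)=0$ and hence $H_1(Y;\IZ)$ finite, so summing over the finite range of $m$ shows that the set of orders of $H_1(Y;\IZ)$ for $Y\in\cF$ is finite.

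The main obstacle I anticipate is the bookkeeping in the second paragraph: presenting the universal Chow family by honestly polynomial equations with polynomially varying coefficients, so that the hypotheses of Lemma~\ref{Varchenko} are satisfied verbatim. Using the Chow variety rather than a union of Hilbert schemes is what keeps the parameter space of finite type without first having to bound the Hilbert polynomials of the members of $\cF$; the price is precisely this Chow-form presentation. Everything else is routine: the nondegeneracy bound on $m$, the passage from topological local triviality to constancy of $H_1$ over connected strata, and the final counting. Note also that it is harmless that the Chow family contains many fibres that are singular, reducible, or non-reduced, since we only need an upper bound on the number of distinct groups $H_1$, and the genuine members of $\cF$ form a subset of the fibres.
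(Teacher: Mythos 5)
Your proposal is correct and follows essentially the same route as the paper: parametrize degree-$d$ surfaces by the Chow variety with its universal family, stratify the base via Varchenko's equisingularity theorem (Lemma~2.8) into finitely many locally closed strata whose connected components carry topologically locally trivial families, and conclude that only finitely many groups $H_1(Y;\IZ)$ occur among the fibres. Your one addition --- replacing $\IP^n$ by the linear span of $Y$ and using $d\geq\codim Y+1$ for an irreducible nondegenerate surface to bound the span dimension by $d+1$ --- is a refinement the paper leaves implicit, and it is precisely what makes the count depend on $d$ alone rather than on the pair $(n,d)$.
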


\begin{proof} Consider the Chow variety $C_{n,2,d}$ of surfaces of degree $d$ in $\IP^n$. There is a closed subvariety $\Gamma\subset C_{n,2,d}\times\IP^n$ which determines the algebraic family of all closed degree $d$ surfaces in $\IP^n$. This gives a morphism $\psi: \Gamma\to C_{n,2,d}$ such that the fibers of $\psi$ are surfaces of degree $d$ in $\IP^n$. By Lemma \ref{Varchenko} there is a stratification of $C_{n,2,d}$ say $C_1\supset C_2\dots\supset C_s$ where $C_1$ is Zariski open in $C_{n,2,d}$ and $C_{i+1}$ is Zariski open in $C_{i}$ such that any two surfaces lying over the points in $C_i - C_{i+1}$ are homeomorphic. A surface $Y$ of degree $d$ with $b_1(Y)=0$ may not lie in the stratum $C_1 - C_2$ necessarily but will lie in some stratum $C_i - C_{i+1}$. The connected components $\{U_{i,i+1}^{j}\}_{j=1,\dots,s_i}$ of this stratum are finitely many and are such that surfaces lying over points in a fixed component $U_{i,i+1}^j$ are mutually homeomorphic. Hence all the surfaces lying over points in $U_i$ have isomorphic $H_1$. This proves the result

\end{proof}

\section{Completion of the proof of Main Theorem}

As above we consider the Chow variety $C_{n,2,d}$ of surfaces of degree $d$ in $\IP^n$ and the algebraic family $\Gamma\subset C_{n,2,d}\times\IP^n$ with the morphism $\psi: \Gamma\to C_{n,2,d}$. Let $\widetilde{\Gamma}\to {\Gamma}$ be a resolution of singularities and $\tilde\psi: \widetilde{\Gamma}\to C_{n,2,d}$ the induced morphism. Now the fibers of $\tilde\psi$ are smooth for points in a non-empty Zariski-open subset $U\subset C_{n,2,d}$. By the earlier argument the fibers lying over a connected component of $U$ are mutually diffeomorphic, and so have isomorphic $H_1$. Next we consider the morphism $\psi: \psi^{-1}(C_{n,2,d}-U)\to C_{n,2,d}-U$. By considering various irreducible components of $C_{n,2,d}-U$ and resolutions of singularities of the inverse images of these irreducible components we deduce that there are only finitely many orders of $H_1(X';\IZ)$ where $X'\to X$ is a resolution of singularities of the completion $X$ of $S$ as in the statement of Main Theorem.\\

Finally, the map $H_1(S';\IZ)\to H_1(S;\IZ)$ is a surjection by Lemma~$2.1$. Hence the order of $H_1(S;\IZ)$ is also bounded by $d$.\\

\nind
Therefore the proof of the main result stated in the introduction is complete.\\

\begin{remark} Using Mumford's presentation for the local fundamental group \cite{Mumford} the same proof shows as above that if a singular point $p$ of $X$ is a quotient singular point then the order of the local fundamental group at $p$ is bounded by a function of $d$.

\end{remark}

\section{Other bounds}

Let $X$ be an irreducible normal projective surface of degree $d$ in $\IP^n$. 

\begin{lemma} The embedding dimension of $X$ at any point is at most $d+1$.
\end{lemma}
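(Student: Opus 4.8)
The plan is to control the intrinsic invariant $\mathrm{embdim}_p(X) = \dim_{\IC} \mathfrak{m}_p/\mathfrak{m}_p^2$, where $\mathfrak{m}_p$ is the maximal ideal of the local ring $\cO_{X,p}$, by comparing it with the local multiplicity $\mathrm{mult}_p(X)$ (the Hilbert--Samuel multiplicity of $\cO_{X,p}$). The key relation I would invoke is Abhyankar's inequality: for any Noetherian local ring $(R,\mathfrak{m})$ one has $\mathrm{embdim}(R) \le e(R) + \dim(R) - 1$, where $e(R)$ is the multiplicity. Applying this to $R = \cO_{X,p}$, which has dimension $2$ since $X$ is a surface, gives $\mathrm{embdim}_p(X) \le \mathrm{mult}_p(X) + 1$. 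Thus the whole statement reduces to the bound $\mathrm{mult}_p(X) \le d$.

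For the multiplicity bound I would use a general linear section. Since $X$ has dimension $2$ in $\IP^n$, its codimension is $n-2$; choosing a general linear subspace $L \cong \IP^{n-2}$ through $p$, the intersection $L \cap X$ is a finite set of points whose total degree equals $\deg X = d$ by Bezout. The local contribution of $p$ to this intersection is the local intersection multiplicity, which for general $L$ through $p$ equals $\mathrm{mult}_p(X)$; as the remaining points contribute nonnegatively, we obtain $\mathrm{mult}_p(X) \le d$. Combining with the first step yields $\mathrm{embdim}_p(X) \le d+1$.

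The one place needing care is the passage from degree to multiplicity, or equivalently the exact form of Abhyankar's inequality. A clean self-contained route is to argue directly with the projectivized tangent cone $\mathrm{Proj}\bigl(\mathrm{gr}_{\mathfrak{m}_p}\cO_{X,p}\bigr)$: it is a one-dimensional scheme spanning $\IP^{e-1}$, where $e = \mathrm{embdim}_p(X)$ (nondegeneracy holds because the associated graded ring is generated by exactly $e$ linear forms, no proper linear space containing it), and its degree is $\mathrm{mult}_p(X)$. The classical bound ``degree $\ge$ codimension $+\,1$'' then gives $\mathrm{mult}_p(X) \ge (e-1)-1+1 = e-1$, i.e.\ $\mathrm{embdim}_p(X) \le \mathrm{mult}_p(X) + 1 \le d+1$. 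I expect the main obstacle to be confirming that this degree inequality applies to the possibly reducible and non-reduced tangent cone, which is exactly the content that lets one dispense with any Cohen--Macaulay hypothesis in Abhyankar's inequality; once that is granted, the remainder is a direct assembly of standard facts.
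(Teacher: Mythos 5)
Your overall strategy is the same as the paper's: bound the multiplicity by $d$ via general linear sections through $p$ (Bezout), then pass from multiplicity to embedding dimension by an Abhyankar-type inequality. The second step, however, contains a genuine error as you state it. Abhyankar's inequality $\mathrm{embdim}(R)\le e(R)+\dim(R)-1$ is \emph{not} valid for an arbitrary Noetherian local ring; it requires Cohen--Macaulayness (Abhyankar proved it for CM local rings). A counterexample in exactly our dimension: let $R=\IC[[x_1,x_2,x_3,x_4]]/\bigl((x_1,x_2)\cap(x_3,x_4)\bigr)$, the local ring of two $2$-planes in $\IA^4$ meeting at a point. Then $\dim R=2$, $e(R)=2$, but $\mathrm{embdim}(R)=4>e(R)+1=3$. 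Your fallback tangent-cone argument fails on the same example, and your parenthetical claim that it ``lets one dispense with any Cohen--Macaulay hypothesis'' is precisely backwards: here $\mathrm{Proj}\bigl(\mathrm{gr}_{\mathfrak{m}_p}\cO_{X,p}\bigr)$ is two \emph{skew} lines in $\IP^3$, which is nondegenerate of degree $2<\mathrm{codim}+1=3$. The classical bound ``degree $\ge$ codimension $+1$'' holds for irreducible nondegenerate varieties (and for connected reduced equidimensional curves), but not for reducible or disconnected schemes; moreover for depth-$0$ rings embedded components at the irrelevant ideal vanish in $\mathrm{Proj}$, so even scheme-theoretic nondegeneracy of the graded ring does not control the span of the actual subscheme (e.g.\ $\IC[[x,y]]/(x^2,xy)$ has $e=1$, $\dim=1$, $\mathrm{embdim}=2$).

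The gap is repairable in one line, and the repair is exactly what the paper uses: $X$ is a \emph{normal} surface, so $\cO_{X,p}$ satisfies Serre's condition $S_2$ and has dimension $2$, hence is Cohen--Macaulay, and CM-Abhyankar applies. Indeed the paper's proof is nothing but the CM case of Abhyankar's inequality made explicit: for two general hyperplanes $H_1,H_2$ through $p$ with local equations $h_1,h_2$, the pair $(h_1,h_2)$ is a regular sequence, so $\length R/(h_1,h_2)$ equals the multiplicity $e(R)\le d$, and since the Artinian quotient has length $\le d$ its maximal ideal needs $\le d-1$ generators, whence $M$ is generated by $\le d+1$ elements. So your first route goes through once you invoke normality (which you never use, though it is a hypothesis of the lemma); your self-contained tangent-cone route should be dropped or restricted, since the degree inequality you need is false at exactly the level of generality you claim for it.
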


\begin{proof} Let $p\in X$ be a closed point and $(R,M)$ the local ring of $X$ at $p$. For two general hyperplane sections $H_1,H_2$ in $\IP^n$ passing through $p$ let their equations at $p$ be $h_1,h_2$ respectively. Since $X$ is a normal surface, it is Cohen-Macaulay and the length of $R/(h_1,h_2)$ is the multiplicity of the point $p$ in $X$. The multiplicity at $p$ is $\leq d$ since $X$ is of degree $d$, therefore length of $R/(h_1,h_2) \leq d$. This implies that $M$ is generated by $d+1$ elements (not necessarily minimally).

\end{proof}

\nind
Now assume that $X$ is a normal surface of degree $d$ in $\IP^3$. We first prove the following for the germ of a singularity on $X$.\\


\begin{lemma} Let $X'$ be a minimal resolution of singularity of the germ $(X,p)$. Then $\dim H^1(X',{\mathcal O_{X'}})$ is bounded by $d$.
\end{lemma}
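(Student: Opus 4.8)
The plan is to reduce the computation of $\dim H^1(X',\cO_{X'})$, i.e. the geometric genus $p_g$ of the germ $(X,p)$, to the Milnor number $\mu_p$, which has already been bounded by $d$. The first observation is that, since $X$ is a normal surface of degree $d$ in $\IP^3$, in an affine chart around $p$ the germ $(X,p)$ is an \emph{isolated hypersurface singularity} in $\IC^3$, defined by a single equation $f=0$ with $f\in\IC\{x,y,z\}$. The quantity $p_g=\dim H^1(X',\cO_{X'})$ is independent of the chosen resolution, so I may compute it on the minimal one named in the statement, and it suffices to bound $p_g$ by a function of $\mu_p$ alone.

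Next I would recall the bound on the Milnor number already obtained. By Liu's theorem $\mu_p\le 3\tau_p$, and by Bézout applied to the three partials of $f$ (each of degree $\le d-1$) one has $\sum_{q}\tau_q\le (d-1)^3$; in particular $\mu_p\le 3\tau_p\le 3(d-1)^3$, a bound depending on $d$ alone.

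The key input is the comparison of $p_g$ with $\mu_p$ coming from the mixed Hodge structure on the Milnor fibre $F$ of $f$. By Steenbrink's theory the $\mu_p$ spectral numbers of $(X,p)$ lie in the interval $(0,3)$, and the geometric genus equals the number of spectral numbers that are at most $1$; in particular $p_g\le \mu_p$. (Equivalently one may invoke Durfee's relation between $p_g$ and the zero/positive part of the intersection form on $H_2(F)$, which again gives a linear bound $p_g\le c\,\mu_p$.) Combining, $\dim H^1(X',\cO_{X'})=p_g\le \mu_p\le 3(d-1)^3$, which is bounded by $d$.

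The main obstacle is precisely this last comparison. Unlike the size and the entries of the resolution intersection matrix, which were bounded purely topologically in the previous lemmas, the geometric genus is \emph{not} a topological invariant of the singularity and so cannot be read off from the bounded weighted dual graph alone. What rescues the argument is the hypersurface structure of $(X,p)$, which forces $p_g$ to be governed by the single analytic invariant $\mu_p$ through the Milnor fibre. Everything else — passing to an affine chart, identifying $p_g$ with $\dim H^1(X',\cO_{X'})$, and the bookkeeping with $\tau_p$ and Bézout — is routine given the earlier results.
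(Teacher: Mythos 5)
Your argument is correct, but it is genuinely different from the paper's. Both proofs share the same starting point, namely the bound $\mu_p\le 3\tau_p$ (Liu) together with B\'ezout applied to the partials, giving $\mu_p\le 3(d-1)^3$ --- the paper established this already when counting the singular points of $X$ --- but they convert the bound on $\mu_p$ into a bound on $p_g=\dim H^1(X',\cO_{X'})$ by different devices. You invoke the mixed Hodge structure on the Milnor fibre: by Steenbrink the spectrum of the hypersurface germ consists of $\mu_p$ numbers in $(0,3)$ and $p_g$ equals the number of them in $(0,1]$, whence $p_g\le\mu_p$ (or, as you note, Durfee's $2p_g=\mu_0+\mu_+\le\mu_p$). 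The paper instead uses Laufer's formula
$$1+\mu=e(E)+K_{X'}\cdot K_{X'}+12\cdot\dim H^1(X',\cO_{X'})$$
on the minimal resolution, and must therefore also bound the other two terms: $e(E)$ comes from the earlier lemmas bounding the dual graph, while $K_{X'}^2$ is bounded by writing the canonical cycle $K_{X'}=\Sigma a_iE_i$ supported on $E$ (possible because the hypersurface germ is Gorenstein and the resolution is minimal), applying adjunction $E_i^2+K_{X'}\cdot E_i=2p_i-2$ with the arithmetic genera $p_i$ bounded by $d$, and solving for the $a_i$ by Cramer's rule on the bounded intersection matrix. Your route is shorter and bypasses all of this intersection-matrix bookkeeping, at the cost of importing the Hodge-theoretic spectrum; the paper's route stays within resolution combinatorics plus Laufer's formula, and yields as byproducts the bounds on the canonical cycle coefficients and on the local $K_{X'}^2$, which are exactly what the next lemma reuses to bound $p_g(X')$ and $K_{X'}^2$ globally via the stalks of $R^1f_*\cO_{X'}$. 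Note finally that both arguments, yours and the paper's, depend essentially on $(X,p)$ being a hypersurface germ: Laufer's formula requires a Gorenstein (smoothable) singularity just as your spectrum argument requires a Milnor fibre, so neither proof extends as stated to normal surfaces in $\IP^n$ of higher codimension.
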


\begin{proof}
Laufer has proved the following nice formula for the Milnor number $\mu$ of $X$ at a singular point $p$ \cite{Laufer}.

Let $X'$ be a minimal resolution of singularity of the germ $(X,p)$. Then
$$1+\mu=e(E)+K_{X'}\cdot K_{X'}+12\cdot \dim H^1(X',{\mathcal O_{X'}}).$$
Here $E$ is the exceptional divisor of the resolution, $K_{X'}$ the canonical divisor of $X'$, and ${\mathcal O_{X'}}$ the structure sheaf of $X'$. We have seen earlier that $e(E)$ and the determinant $(E_i\cdot E_j)$ are bounded by $d$. Since $X'$ is a minimal resolution and $X$ is Gorenstein
there is a canonical divisor $K_{X'}=\Sigma a_iE_i$ supported on $E$. The arithmetic genera  $p_i$ of $E_i$ are bounded by $d$. By adjunction $E_i^2+K_{X'}\cdot E_i=2p_i-2$. Solving for $a_i$ using Cramer's rule we deduce that $a_i$ are bounded by $d$. This implies that $K_{X'}^2$ is bounded by $d$.
\end{proof}

\begin{lemma} Let $X$ be a normal projective surface of degree $d$ in $\IP^n$ such that $b_1(X')=0$ for a resolution of singularities $X'\to X$. Assume that $X'$ is a minimal resolution of singularities of $X$. Then $p_g(X'),K_{X'}^2$ are bounded by $d$.

\end{lemma}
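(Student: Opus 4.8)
The plan is to combine Hodge theory with Noether's formula, feeding in the bounds on $b_2$ and $e$ already established in Section~4.

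First I would record that since $X'$ is a smooth projective surface with $b_1(X')=0$, Hodge theory gives $b_1(X')=2\,h^1(X',\mathcal{O}_{X'})$, so the irregularity $q:=h^1(X',\mathcal{O}_{X'})$ vanishes. Hence the holomorphic Euler characteristic simplifies to $\chi(\mathcal{O}_{X'})=1-q+p_g(X')=1+p_g(X')$. Next, to bound $p_g$ I would use the Hodge decomposition of $H^2(X';\IC)$, namely $b_2(X')=h^{2,0}+h^{1,1}+h^{0,2}=2\,p_g(X')+h^{1,1}(X')$, which gives the crude but sufficient inequality $p_g(X')\le b_2(X')/2$. Since $p_g$ is a birational invariant of smooth projective surfaces, its value on the minimal resolution agrees with its value on the suitable resolution produced by the Noether-normalization/fiber-product construction, for which $b_2$ was shown to be bounded by $d$; hence $p_g(X')$ is bounded by $d$.

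Finally, for $K_{X'}^2$ I would invoke Noether's formula $12\,\chi(\mathcal{O}_{X'})=K_{X'}^2+e(X')$, which rearranges to $K_{X'}^2=12\bigl(1+p_g(X')\bigr)-e(X')$. Because $b_1(X')=b_3(X')=0$, we have $e(X')=2+b_2(X')$, so once $p_g$ and $b_2$ are bounded by $d$, the value of $K_{X'}^2$ is squeezed between two explicit functions of $d$ and is therefore bounded. Combined with the previous paragraph this establishes both assertions.

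The one point requiring genuine care — and the main obstacle — is that, in contrast to $p_g$ and $\chi(\mathcal{O})$, neither $K^2$ nor $b_2$ is a birational invariant: a single blow-up raises $b_2$ (and $e$) by one while lowering $K^2$ by one. The bound on $b_2$ was proved only for the particular resolution coming from the fiber-product construction, whereas this lemma demands it for the \emph{minimal} resolution. Since every resolution of a surface dominates the minimal one, the minimal resolution $X'$ is obtained from the constructed resolution by a sequence of blow-downs; consequently $b_2(X')\le b_2$ of the constructed resolution and likewise $e(X')\le e$ of the constructed resolution, so both remain bounded by $d$. Substituting these into Noether's formula then yields the asserted bound on $K_{X'}^2$ for the minimal resolution itself, and no quantitative loss beyond the existing bounds is incurred.
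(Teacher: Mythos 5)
Your proposal is correct, and its computational core is the same as the paper's: in the smooth case the paper also bounds $p_g$ by the Hodge decomposition ($2p_g < b_2$) and then extracts $K^2$ from Noether's formula $\chi(\mathcal{O})=(K^2+e)/12$ using $b_1=0$. Where you genuinely diverge is in the passage to the normal case. The paper invokes Artin's result $e(X)-e(X')=\dim R^1f_*\mathcal{O}_{X'}$, together with its earlier bounds on the number of singular points and on $\dim H^1$ of the local resolutions at each singularity (Lemma 6.2, via Laufer's formula), and then says the smooth-case computation goes through. You instead apply Hodge theory and Noether's formula directly to the minimal resolution $X'$, transferring the bounds on $b_2$ and $e$ from the ``suitable'' resolution of Theorem 4.10 by the standard fact that every resolution of a normal surface factors through the minimal one, so contracting $(-1)$-curves only decreases $b_2$ and $e$. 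Your route is more elementary (no Artin, no local $R^1f_*$ bounds) and it explicitly closes a point the paper leaves implicit: Theorem 4.10 bounds $b_2$ only for a particular constructed resolution, while the lemma is asserted for the minimal one, and your monotonicity argument is exactly the missing bridge. The paper's route, in exchange, produces per-singularity cohomological data ($\dim R^1f_*\mathcal{O}_{X'}$ at each singular point) that is of independent interest elsewhere in the paper. Both arguments are sound, and your use of birational invariance of $p_g$ is correct though slightly redundant, since the $b_2$ bound on the minimal resolution already yields $p_g(X')\le b_2(X')/2$ directly.
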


\begin{proof} We will first prove the result when $X$ is smooth and indicate the modification necessary when $X$ is normal.\\

We have already shown that $b_2(X)$ is bounded by $d$. By Hodge decomposition it follows that
$2\cdot p_g(X)< b_2(X)$.\\
By Noether' formula, $\chi(X,{\mathcal O}_X)=(K^2+e(X))/12$. By assumption, $b_1(X)=0$. So $K^2$ is bounded by $d$.\\

Now assume that $X$ is normal and $f: X'\to X$ a minimal resolution of singularities of $X$.
It is well-known \cite{Artin} that $e(X)-e(X')=\dim R^1f_*{\mathcal O}_{X'}$. We have already proved that the number of singularities of $X$ is bounded by $d$. Also, for each singular point $p$ of $X$ the dimension of the stalk of $R^1f_*{\mathcal O}_{X'}$ at $p$ is bounded by $d$.
It is already proved that $b_2(X')$ is bounded by $d$. Now we can show as in the previous part that $p_g(X'),K_{X'}^2$ are bounded by $d$. 
\end{proof}

\nind
We need the definition of fundamental group at infinity and first homology at infinity for stating the following lemma:

\begin{definition}
    As in our notations, let $S$ be a normal affine surface with completion $X$ and boundary divisor $D$ and let $X'$ be an snc resolution of singularities of $X$. Let $D'$ be the connected inverse image of $D$ in $X'$. Let $N$ be a sufficiently small compact tubular neighbourhood of $D'$ with boundary $\partial{N}$. The fundamental group at infinity $\pi_1^\infty(S)$ is defined to be $\pi_1(\partial{N})$ and $H_1^\infty(S)$ is defined to be $H_1(\partial{N})$. These two groups are independent of the choice of snc resolution of singularities $X'$ hence are well defined.  
\end{definition}

\begin{lemma}
Let $S$ be a normal affine surface of degree $d$ in $\IA^n$ with completion $X$ in $\IP^n$. Suppose that for a resolution of singularities $S'$ of $S$ we have $b_1(S')=0$. Then $H_1^{\infty}(S)$ has rank and order of torsion subgroup bounded by $d$.
\end{lemma}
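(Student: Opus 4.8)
The plan is to identify $H_1^\infty(S)$ with the first homology of the boundary $\partial N$ of a tubular neighbourhood of the divisor $D'$ in the snc resolution $X'$, and then to bound its rank and torsion by the already-established bounds on the intersection data of $D'$. The key observation is that $\partial N$ is the total space of an $S^1$-bundle (a plumbing boundary) associated to the weighted dual graph of $D'$. First I would recall that, because $D'$ is an snc curve, the topology of $\partial N$ is determined entirely by the plumbing graph: the genera $g_i$ and self-intersections $D_i'^2$ of the components, together with the incidence matrix recording the intersection points $D_i'\cdot D_j'$. By Lemma~4.9 and its corollary, together with the degree bounds in Section~4 (the number of components $r\le d$, the arithmetic genera and the intersection matrix $(D_i'\cdot D_j')$ are all bounded by $d$), every piece of combinatorial data defining this plumbing graph is bounded by a function of $d$.

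The second step is to make the homology of $\partial N$ explicit via the Mayer--Vietoris / long exact sequence of the pair $(N,\partial N)$, or equivalently via the standard presentation of the homology of a plumbed $3$-manifold. Since $N$ deformation retracts onto $D'$, one has $H_\ast(N)\cong H_\ast(D')$, and by Lefschetz duality $H_k(N,\partial N)\cong H^{4-k}(N)\cong H^{4-k}(D')$. Feeding these into the long exact sequence
\[
H_2(N)\to H_2(N,\partial N)\to H_1(\partial N)\to H_1(N)\to H_1(N,\partial N),
\]
the connecting map $H_2(N)\to H_2(N,\partial N)$ is given precisely by the intersection matrix $(D_i'\cdot D_j')$, which is nonsingular by Lemma~\ref{bounded determinant} and has determinant bounded by $d$ (Corollary after Lemma~4.9). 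Consequently the cokernel of this map is a finite abelian group whose order divides $\lvert\det(D_i'\cdot D_j')\rvert$, hence is bounded by $d$; this cokernel contributes the torsion of $H_1(\partial N)$ coming from the self-intersections. The free part of $H_1(\partial N)$ then comes from $H_1(N)\cong H_1(D')$ together with the $b_1$ of the incidence graph, both of which are controlled: the ranks $b_1(D_i')=2g_i$ are bounded by $d$, the number of components is at most $d$, and the first Betti number of the dual graph is bounded by the (bounded) number of edges.

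The final step is bookkeeping: assemble these contributions to conclude that $\operatorname{rank} H_1(\partial N)$ is bounded by a function of $d$ (it is at most $\sum_i 2g_i + b_1(\text{dual graph})$, each summand bounded by $d$ with at most $d$ summands) and that the torsion subgroup has order bounded by $\lvert\det(D_i'\cdot D_j')\rvert$, which is bounded by $d$ by the Corollary to Lemma~4.9. I expect the main obstacle to be the precise identification of $H_1(\partial N)$ with the cokernel and kernel data of the plumbing sequence when the components $D_i'$ have positive genus and the graph is not a tree; one must be careful that the long exact sequence splits the genus contributions (free, rank $2g_i$ per curve) from the self-intersection contributions (the torsion, governed by the determinant) and from the loop contributions of the dual graph (free, rank $b_1$ of the graph). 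Once this decomposition is in place, every term is visibly bounded by the degree bounds already proved in Sections~4 and~6, so the result follows.
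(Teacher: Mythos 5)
Your proposal is correct and follows essentially the same route as the paper: both bound the torsion of $H_1(\partial N)$ by the determinant of the nonsingular intersection matrix $(D_i'\cdot D_j')$ (Lemma~\ref{bounded determinant} and its corollary) and bound the rank by the genera of the components together with the cycles of the dual graph, via the long exact sequence of the pair $(N,\partial N)$ with Lefschetz duality. The only difference is presentational: where you derive the splitting $H_1(\partial N)\cong \mathrm{coker}(D_i'\cdot D_j')\oplus \IZ^{p+2\sum_i g_i}$ by hand from the pair sequence (using that $H_1(D')$ is free and $H^3(D')=0$), the paper simply cites Mumford's structure theorem \cite{Mumford} for this decomposition and uses the rational cohomology sequence of $(N,\partial N)$ only to bound $b_1(\partial N)$.
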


\begin{proof}
As before, let $X'$ be the minimal snc resolution of singularities of $X$ and let $D'$ be the connected inverse image of $D$ in $X'$. Let there be $p$ cycles in the dual graph of $D'$ and let $\{g_i\}_i$ be the various genera of the irreducible curves in $D'$. Let $N$ be a sufficiently small compact tubular neighbourhood of $D'$ with boundary $\partial{N}$. Then the fundamental group at infinity $\pi_1^{\infty}(S)$ is $\pi_1(\partial{N})$ and $H_1^{\infty}(S):=H_1(\partial{N})$. Mumford showed in \cite{Mumford} that $H_1(\partial{N})=K\oplus\IZ^{p+2\sum_i{g_i}}$ where $K$ is a finite abelian group with $|K|=|det(D'_i.D'_j)|$ where $D'_i$ are the irreducible curves in $D'$. Since we know the boundedness of this determinant by Lemma ~\ref{bounded determinant}, the result on torsion order is proved.

Now consider the long exact sequence for rational cohomology of the pair $(N,\partial{N})$: 
$$...H^1(N)\to H^1(\partial{N})\to H^2(N,\partial{N})...$$
which gives the exact sequence (using duality and that $N$ retracts to $D$ and that $N-\partial{N}$ strong deformation retracts to $D$): 
$$...H^1(D)\to H^1(\partial{N})\to H_2(D)...$$

We have already seen that $b_1(D)$ and $b_2(D)$ are bounded by $d$ hence $b_1(\partial{N})$ is bounded by $d$. Therefore the number of cycles $p$ in the dual graph of $D$ is also bounded by $d$. Since the genera $g_i$ are also bounded by $d$, we get finally that even the rank of $H_1^{\infty}(S)$ is bounded by $d$. This finishes the proof.  

\end{proof}

\begin{lemma}
$H_1(S_{reg})$ is bounded by $d$, where $S_{reg}$ is the smooth locus of a normal affine surface $S\subset\IA^n$ of degree $d$ satisfying $b_1(S')=0$. 
\end{lemma}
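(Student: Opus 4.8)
The plan is to compare $H_1(S_{reg})$ with $H_1(S')$ for a conveniently chosen resolution and then control the discrepancy quantitatively; note that this is precisely the $H_1(S-\Sing~S)$ assertion of the Main Theorem. First I would fix a resolution of singularities $f\colon S'\to S$ which is an isomorphism over $S_{reg}$ and whose exceptional divisor $E=\bigcup_i E_i$ has intersection matrix bounded by $d$; such a resolution exists by the earlier lemmas bounding the intersection matrices of exceptional curves. Then $S_{reg}$ is a Zariski-open subset of $S'$ with complement $E$, so Lemma~\ref{GS} applies and shows that the restriction map $H_1(S_{reg};\IZ)\to H_1(S';\IZ)$ is surjective with finite kernel. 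Since $b_1(S')=0$, the Main Theorem gives that the order of $H_1(S';\IZ)$ is bounded by $d$, and $|H_1(S_{reg};\IZ)|=|\ker|\cdot|H_1(S';\IZ)|$, so it remains only to bound the order of this finite kernel.

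To bound the kernel I would use the long exact homology sequence of the pair $(S',S_{reg})$,
$$H_2(S')\to H_2(S',S_{reg})\xrightarrow{\ \partial\ } H_1(S_{reg})\to H_1(S'),$$
together with excision and Lefschetz duality for a regular neighbourhood $N$ of $E$. Since $E$ is a curve, excision gives $H_2(S',S_{reg})\cong H_2(N,\partial N)\cong H^2(N)\cong H^2(E)\cong\IZ^{c}$, where $c$ is the number of irreducible components of $E$; the generators are sent by $\partial$ to the meridians $\mu_i$ of the $E_i$. Exactness identifies the kernel of $H_1(S_{reg})\to H_1(S')$ with the image of $\partial$, hence with the cokernel of $H_2(S')\to\IZ^{c}$. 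Because the class $[E_i]\in H_2(S')$ maps to the $i$-th row $(E_i\cdot E_j)_j$ of the exceptional intersection matrix $A$, this cokernel is a quotient of $\IZ^{c}/A\IZ^{c}$, whose order is $|\det A|$.

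The key point is then that $A$ is negative definite (so $\det A\neq 0$) and that $|\det A|$ is bounded by $d$, which is exactly the content of the earlier lemmas bounding the exceptional intersection matrices and of Lemma~\ref{bounded determinant}. Combining, the kernel has order at most $|\det A|$, bounded by $d$, and hence the order of $H_1(S_{reg};\IZ)$ is bounded by $d$. I expect the only delicate step to be the homological bookkeeping that identifies $\ker$ with the cokernel of the intersection matrix; the duality statements are standard once one observes that $E$ is a curve, and the numerical boundedness of $\det A$ is already in hand. A cleaner variant avoiding explicit duality would be to argue directly that $\ker$ is generated by the $c\le d$ meridians $\mu_i$ subject to the relations $A\mu=0$, yielding the same bound $|\ker|\le|\det A|$.
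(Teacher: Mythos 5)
Your proof is correct and follows essentially the same route as the paper: the paper runs the compactly supported cohomology sequence of the pair $(S',E)$, namely $H^2_c(S')\to H^2(E)\to H^3_c(S',E)\to H^3_c(S')\to (0)$, with $H^3_c(S',E)\cong H_1(S_{reg})$ and $H^3_c(S')\cong H_1(S')$, which is precisely the dual of your homology sequence for the pair $(S',S_{reg})$, and it likewise combines the bounded exceptional data with the already-established bound on $H_1(S')$. If anything you are more explicit than the paper on the one delicate point --- identifying the kernel with the cokernel of the intersection matrix $A$ and bounding its order by $\lvert\det A\rvert$, which the paper leaves implicit --- though the determinant bound you need is for the exceptional divisor $E$ (via the lemma on intersection matrices of exceptional curves together with negative definiteness), not Lemma~\ref{bounded determinant}, which concerns the boundary divisor $D'$ at infinity.
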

 
\begin{proof}
Let $E$ be the total exceptional locus for the map $S'\to{S}$. Then as we have seen $H^2(E)$ is bounded by $d$. Consider the long exact cohomology sequence (with integer coefficients) with compact support for the pair $(S',E)$.
$$H^2_c(S')\to H^2(E)\to H^3_c(S',E)\to H^3_c(S')\to (0).$$
By duality, $H^3_c(S')\cong H_1(S')$. We have already proved that $H^2(E)$ and $H_1(S')$ are bounded by $d$. It follows that $H^3_c(S',E)\cong H_1(S'-E)$ is bounded by $d$.
\end{proof}

\section{Intrinsic Degree}

\begin{definition} For a smooth projective surface $X$ let $d_{intr}$ be the smallest positive integer such that $X$ has a smooth irreducible ample divisor $D$ with base point free linear system $|D|$ and such that $D^2=d_{intr}$. Then three general sections of $|D|$ give a N\"{o}ether normalization $X\rightarrow\IP^2$ of degree $d_{intr}$. We call $d_{intr}$ the intrinsic degree of $X$. 
\end{definition}

We will show that if $f: X'\rightarrow{X}$ is a blow-up at a general point $p\in{X}$ with exceptional curve $E$, the intrinsic degree of $X'$ is bounded by $d$. We will use the following two results by K\"{u}chle and Coppens respectively. 

\begin{lemma}[K\"{u}chle \cite{Kuchle}]
Let $H'=f^*H$ be the pull back of an ample divisor $H$ on $X$ in the above situation. Then the divisor $L=nH'-E$ is ample if and only if $L^2>0$ provided $n\geq{3}$ which can be relaxed to $n\geq{2}$ if $H^2\geq{2}$. 
\end{lemma}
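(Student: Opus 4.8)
The forward implication is immediate: an ample divisor on a surface has positive self-intersection, so $L$ ample forces $L^2>0$. The substance is the converse, and the plan is to verify it through the Nakai--Moishezon criterion. Since $L^2>0$ is given, it suffices to prove $L\cdot C>0$ for every irreducible curve $C$ on $X'$. First I would record the intersection data on the blow-up: as $H'=f^*H$, one has $H'^2=H^2$, $H'\cdot E=0$ and $E^2=-1$, whence $L^2=n^2H^2-1$ and $L\cdot E=1>0$. Moreover $L\cdot H'=nH^2>0$, so together with $L^2>0$ the class $L$ lies in the interior of the positive cone of $X'$, on the same side as the nef class $H'$.

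Next I would clear away all curves of non-negative self-intersection at once. If $C$ is irreducible with $C^2\ge 0$, then $C$ meets an ample class positively and so lies in the closure of that half of the positive cone; since the intersection form has signature $(1,\rho-1)$, two classes in the closure of one half of the positive cone have non-negative product, strictly positive as soon as one of them has positive self-intersection. As $L^2>0$, this light-cone consequence of the Hodge index theorem yields $L\cdot C>0$, and I would include the one-line signature computation.

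The only remaining case, and the crux, is an irreducible curve $C\ne E$ with $C^2<0$. Such a $C$ is the proper transform of the irreducible curve $\bar C=f(C)\subset X$; writing $m=\mathrm{mult}_p\bar C$, the projection formula gives $H'\cdot C=H\cdot\bar C$ and $E\cdot C=m$, so that $L\cdot C=n(H\cdot\bar C)-m$. Everything thus comes down to the inequality $n(H\cdot\bar C)>m$. For $m\le 1$ this is clear from $H\cdot\bar C\ge 1$ and $n\ge 2$. For $m\ge 2$ the hypothesis that $p$ is general is essential: a curve through the general point $p$ with an $m$-fold point moves in a family whose multiple point sweeps out $X$, and for such a moving multiple point one has a self-intersection estimate of the shape $\bar C^2\ge (m-1)^2$. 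Combined with the Hodge index inequality $(H\cdot\bar C)^2\ge H^2\,\bar C^2$ on $X$, this gives $H\cdot\bar C\ge \sqrt{H^2}\,(m-1)$, and then $n(H\cdot\bar C)-m\ge n\sqrt{H^2}\,(m-1)-m$, which is positive for all $m\ge 2$ precisely when $n^2H^2>4$, the binding case being $m=2$. This holds when $n\ge 3$ for every ample $H$, and when $n=2$ provided $H^2\ge 2$, matching the two stated regimes.

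The main obstacle is exactly this last estimate: controlling the multiplicity $m$ of an arbitrary curve through the blown-up point against its degree $H\cdot\bar C$. Everything else is formal, but here the generality of $p$ must be used in an essential way. Equivalently, one is proving a lower bound for the Seshadri constant $\epsilon(H;p)$ at a general point, since $nH'-E$ is ample if and only if $\epsilon(H;p)>1/n$; and it is the precise strength of the moving-multiplicity bound that decides the borderline between the $n\ge 3$ and the $(n\ge 2,\ H^2\ge 2)$ regimes.
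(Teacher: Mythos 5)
The paper offers no proof of this lemma at all: it is quoted directly from K\"{u}chle's paper \cite{Kuchle}, so the only meaningful comparison is with the original source, and your reconstruction is essentially K\"{u}chle's own argument. The formal parts of your proposal are correct: the intersection computations on the blow-up, the Hodge-index/light-cone disposal of irreducible curves with $C^2\geq 0$ (the signature argument is sound, since $C\cdot A>0$ for $A$ ample keeps the class of $C$ nonzero in the closed positive half-cone, while $L\cdot H'=nH^2>0$ with $H'$ nef places $L$ in the interior of the same half), and the reduction of Nakai--Moishezon to the inequality $n(H\cdot \bar{C})>m$ for proper transforms. Your numerics are also right: with $H\cdot\bar{C}\geq \sqrt{H^2}\,(m-1)$ from Hodge index, the binding case $m=2$ gives exactly the condition $n^2H^2>4$, which reproduces both regimes $n\geq 3$ and $(n\geq 2,\ H^2\geq 2)$.

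The step you invoke without proof --- the self-intersection bound for a curve whose $m$-fold point sits at the general point $p$ --- is, as you correctly identify, the entire crux, and as written it is a genuine gap: ``an estimate of the shape $\bar{C}^2\geq(m-1)^2$'' is asserted, not established. It is, however, a known citable result: Xu's lemma (proved by the Ein--Lazarsfeld differentiation trick) states that if an irreducible curve moves in a family whose $m$-fold point varies, then $\bar{C}^2\geq m(m-1)$, which implies your weaker $(m-1)^2$ and is precisely the input K\"{u}chle himself uses. Two points would make your argument airtight. First, justify the sweeping claim: for $p$ very general, the constructible locus of points admitting an irreducible curve of a given numerical class with an $m$-fold point there is dense once it contains $p$, so such a curve indeed moves in a family whose singular points dominate $X$. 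Second, this a priori yields the conclusion only for \emph{very} general $p$ (a countable intersection of open sets); to recover the statement at a general point one should add that ampleness is a Zariski-open condition in families, so the locus of $p$ for which $nH'-E$ is ample, being open and nonempty, is dense open. With Xu's lemma cited and these two remarks supplied, your proof is complete and coincides in substance with the source's.
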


\begin{proposition}
Intrinsic degree of $X'\leq{4d-1}$ where $d\geq{2}$ is the degree of a non-degenerate smooth projective surface $X$ in $\IP^N$. 
\end{proposition}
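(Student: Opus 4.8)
The plan is to exhibit a single divisor on $X'$ that witnesses the bound, namely $D := 2H' - E$, and to verify that it satisfies all of the requirements in the definition of intrinsic degree. I would take $H$ to be the hyperplane class of the embedding $X\subset\IP^N$, so that $H$ is very ample with $H^2 = d$. Writing $H' = f^*H$ for the pullback under the blow-up $f\colon X'\to X$, I have $H'^2 = d$, $H'\cdot E = 0$ and $E^2 = -1$, hence
$$D^2 = (2H' - E)^2 = 4H'^2 - 4\,H'\cdot E + E^2 = 4d - 1.$$
Thus, once I show that $D$ is a smooth irreducible ample divisor with base-point-free linear system $|D|$, the definition immediately gives that the intrinsic degree of $X'$ is at most $D^2 = 4d - 1$.

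First I would settle ampleness. This is exactly the case $n = 2$ of K\"{u}chle's lemma applied to $L = 2H' - E$: since $L^2 = 4d - 1 > 0$ and the hypothesis $d\geq 2$ means $H^2 = d \geq 2$, the relaxed form of the criterion (allowing $n\geq 2$) applies, and so $L$ is ample. This is the one place where $d\geq 2$ is used, and it is precisely what forces the constant $4$ in the stated bound: one cannot take $n = 1$ here.

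Next I would establish that $|D|$ is base-point-free and that a general member is smooth and irreducible. The key input is that $2H$ is very ample on $X$ (being twice a very ample divisor), so it separates points and tangent directions. Identifying $H^0\big(X', \mathcal{O}_{X'}(2H'-E)\big)$ with the space of sections of $2H$ vanishing at $p$, separation of points shows there are no base points away from $E$, while separation of tangent directions at $p$ shows that $D$ restricts to the base-point-free degree-one system on $E\cong\IP^1$; hence $|D|$ is base-point-free. This is exactly the kind of statement supplied by Coppens' result. Bertini's theorem then gives that a general member is smooth, and, since $D$ is ample with $D^2 > 0$ the morphism $\phi_{|D|}$ is finite onto a surface, so the irreducibility form of Bertini shows that a general member is irreducible. (The argument in fact makes no use of the genericity of $p$.)

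The main obstacle is neither the numerical computation, which is immediate, nor the ampleness, which is handed to us by K\"{u}chle; it is the geometric verification that $|2H'-E|$ is base-point-free with smooth irreducible general member, so that $D$ genuinely qualifies as a divisor of the type appearing in the definition of intrinsic degree. This is where the very ampleness of $2H$, through the Coppens-type base-point-freeness statement, does the real work.
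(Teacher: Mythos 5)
Your proposal is correct and follows essentially the same route as the paper: the same divisor $2H'-E$, the same application of K\"{u}chle's lemma in the relaxed case $n=2$ (using $H^2=d\geq 2$), and the same computation $L^2=4d-1$. You additionally verify base-point-freeness and the existence of a smooth irreducible member via very ampleness of $2H$ and Bertini --- a step the paper leaves implicit in its appeal to the definition --- though note that this verification is self-contained and is not really what Coppens' result supplies: the paper invokes Coppens only in the subsequent remark, where very ampleness of $tH'-E$ with $t\geq 7$ yields the weaker bound $t^2d-1$.
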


\begin{proof} We apply K\"{u}chle's result to our situation where $H$ is a hyperplane section of $X$ not containing the point $p$ where we blow-up. First of all $H^2=d>1$ hence we get that $L=2H'-E$ is an ample divisor. Since $L^2=4H'^2-1=4d-1$ we get by definition that $d_{intr}\leq{4d-1}$, i.e. the intrinsic degree of $X'$ is bounded above by a function of the degree of $X$.
\end{proof}

\begin{lemma}[Coppens \cite{Coppens}]
Let $f:X'\rightarrow{X}$ be the blow-up at a general point $p\in{X}$ and suppose $t\geq{7}$ is such that $\dim{\Gamma(X,tH)}\geq{8}$. Then $tH'-E$ is very ample on $X'$ where $H'=f^{*}H$. 
\end{lemma}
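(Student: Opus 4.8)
The plan is to reduce the very ampleness of $L:=tH'-E$ to a family of $0$-dimensional separation statements on $X$ itself, and then to verify those using the size of $|tH|$ together with the genericity of $p$. Recall that a line bundle $L$ on the smooth surface $X'$ is very ample if and only if for every length-$2$ subscheme $Z\subset X'$ the restriction $H^0(X',L)\to H^0(Z,L|_Z)$ is surjective, equivalently $H^1(X',L\otimes I_Z)=0$. Since $f$ is the blow-up of the single point $p$, the projection formula gives $H^0(X',tH'-E)\cong H^0(X,tH\otimes\mathfrak m_p)$, so that sections of $L$ are exactly the divisors of $|tH|$ passing through $p$. Thus forcing a section to vanish at $p$ consumes one ``level'' of separation, and the vanishings needed on $X'$ translate into the requirement that $|tH|$ separate the point $p$ together with any length-$\le 2$ subscheme lying elsewhere, plus the corresponding first- and second-order data at $p$. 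I would split the length-$2$ subschemes $Z\subset X'$ into those disjoint from $E$, those meeting $E$, and those supported on $E$, and treat each in turn.

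For $Z$ disjoint from $E$ the condition becomes: the curves of $|tH|$ through $p$ must separate points and tangent vectors of $X\setminus\{p\}$, which is the surjectivity of $H^0(X,tH)\to H^0(W,tH|_W)$ for the length-$3$ subschemes $W=\{p\}\cup Z$, i.e. a $2$-very-ampleness statement for $tH$ with one slot pinned at the general point $p$. Here the hypothesis $\dim\Gamma(X,tH)\ge 8$ provides more than enough sections to impose these (at most) three independent conditions with room to spare, while $t\ge 7$ and the ampleness of $H$ supply base-point-freeness and the positivity to run a Reider-type argument (or Coppens' direct analysis of the restricted system). For $Z$ meeting or supported on $E$ I would use that $(tH'-E)|_E\cong\cO_{\IP^1}(1)$ is very ample, so $E$ is embedded as a line; it then remains to separate $E$ from the rest of $X'$ and to separate the normal direction along $E$, which follows from first- and second-order separation of the curves through $p$ at the general point $p$ (again guaranteed by $\dim\Gamma\ge 8$ and genericity).

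The main obstacle I anticipate is excluding the exceptional configurations in the separation step: a Reider-type analysis shows the restriction maps are surjective \emph{unless} there is an effective curve through $p$ (and through $Z$) of small $tH$-degree witnessing the failure, and one must show no such curve survives. This is exactly where the genericity of $p$ is decisive — a general point lies on none of the finitely many offending curves — and where the thresholds $t\ge 7$ and $\dim\Gamma(X,tH)\ge 8$ are calibrated to leave enough sections after imposing the base point at $p$. Assembling the three cases then gives $H^1(X',(tH'-E)\otimes I_Z)=0$ for every length-$2$ subscheme $Z\subset X'$, and hence the very ampleness of $tH'-E$.
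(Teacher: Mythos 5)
You should know at the outset that the paper contains no proof of this statement: it is quoted verbatim as a theorem of Coppens \cite{Coppens}, so there is no internal argument to compare yours against, and a correct proof would amount to reproving Coppens' theorem. Your opening reductions are sound as far as they go: very ampleness of $L=tH'-E$ is indeed equivalent to separating all length-$2$ subschemes $Z\subset X'$, the identification $H^0(X',tH'-E)\cong H^0(X,tH\otimes\mathfrak{m}_p)$ is correct, and the trichotomy ($Z$ disjoint from $E$, meeting $E$, supported on $E$) is the right bookkeeping. (One small slip: surjectivity of $H^0(L)\to H^0(L|_Z)$ is \emph{implied by}, but not equivalent to, $H^1(L\otimes I_Z)=0$ unless one also knows $H^1(X',L)=0$, which is not available here.)

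The genuine gaps are in the two steps that carry all the weight. First, the assertion that $\dim\Gamma(X,tH)\ge 8$ ``provides more than enough sections to impose these three independent conditions'' is a non sequitur: a large $h^0$ does not make point conditions independent --- that independence \emph{is} the content of very ampleness (pull back $\cO(1)$ under a finite non-birational morphism to see a system with many sections separating nothing along fibers). Second, the Reider-type fallback cannot be run from the stated hypotheses: Reider's criterion requires writing $L=K_{X'}+M$ with $M^2\ge 10$ and lower bounds on $M\cdot C$, and nothing here bounds $K_X$ against $H$ --- for $X$ of general type with $K_X$ far more positive than $H$, the divisor $M=tH'-E-K_{X'}$ need not be nef or even have positive square, and the thresholds $t\ge 7$, $h^0\ge 8$ bear no relation to Reider's numerics. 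Relatedly, your use of genericity presumes that the ``offending'' curves witnessing failure of separation form a finite set; failure loci can a priori move in positive-dimensional families sweeping out $X$, in which case a general $p$ lies on \emph{some} member, so finiteness (or a dimension count on the incidence variety of bad pairs $(p,Z)$) must be proved, not invoked. Coppens' actual argument is not adjunction-theoretic at all: it is a projective-geometric analysis of the embedding by $|tH|$ and its inner projection from $p$, controlling multisecant behavior at a general point, which is precisely the machinery your sketch defers to when it cites ``Coppens' direct analysis'' --- i.e., at the crucial moment the proposal assumes the result it set out to prove.
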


\begin{remark}We can apply Coppens' result directly to get a bound on $d_{intr}$ similar to the above Proposition. Since $tH'-E$ is very ample we get $d_{intr}(X')\leq{t^2{d}-1}$ which gives the same result of boundedness of intrinsic degree of $X'$, but with larger bound since $t\geq{7}$.  
\end{remark}

\section{Examples}

\begin{enumerate}

\item Let $S:=\{xy=1\}\subset \IA^3$. Then $S$ is a smooth affine surface of degree $2$, but $b_1(S)=1$.\\

\item Let $S:=\{x^3+y^3+z^3=0\}\subset\IA^3$. Then $S$ is contractible of degree $3$, but for a resolution of singularities $S'\to S$ we have $b_1(S')=2$.\\

\item This example is from \cite{Maharana}. Let $S:=\{z^d-h(x)(h(x)y+1)=0\}$ where $h(x)=x(x-1)(x-2)\dots(x-m+1)$ and $m\leq d$. The morphism $S\to\IA^1$ given by $x$ has fiber $\{x=c\} \cong\IA^1$ for $c\neq 0,1,2,\dots,m-1$. The fibers over $0,1,2,\dots,m-1$ are isomorphic to $\IA^1$ with multiplicity $d$. From this it can be shown that $H_1(S;\IZ)\cong (\IZ/(d))^m$. 

\end{enumerate}

\nind
The first two examples show that the hypothesis $b_1(S')=0$ in the main theorem cannot be removed. The third example shows that the order of $H_1(S;\IZ)$ for a degree $2m+1$ surface can range all the way from a linear function in $d$ to an exponential in $d$. This suggests that no nice bound can be given for a general such surface of degree $d$. \\ 

 \section*{Acknowledgements}
 The authors would like to thank Sudarshan Gurjar for suggesting the proof of Theorem 4.11 using stratification.
 
 The first author is thankful to the Indian National Science Academy for a Fellowship during this work.

 We thank the referee for reading the manuscript carefully and making suggestions for improving the presentation.

 \end{document}